\documentclass[a4paper,11pt,reqno]{article}
\usepackage[centering,top=2cm,bottom=2cm,right=2cm,left=2cm,
           headsep=20pt,headheight=20pt]{geometry}
\usepackage{amsmath,amssymb,amsthm,mathtools}
\usepackage{graphicx}
\usepackage{placeins}
\usepackage{enumitem}
\usepackage{cases}
\usepackage[hyperindex]{hyperref}
\hypersetup{pdftitle={Additive Mertens Sum},
pdfauthor={Daoyi Peng & Hao Liu},colorlinks,linkcolor=blue,citecolor=red,urlcolor=magenta,anchorcolor=green}
\usepackage{xcolor}
\usepackage{booktabs}
\usepackage{multirow}
\usepackage{float}
\usepackage{pgfplots}
\pgfplotsset{compat=1.18}
\usepackage{indentfirst}

\newtheoremstyle{cnplain}
  {\topsep}{\topsep}
  {\normalfont}
  {2em}
  {\bfseries}
  {.}
  {0.5em}
  {}
\theoremstyle{cnplain}
\newtheorem{theorem}{Theorem}
\newtheorem{lemma}{Lemma}
\newtheorem{proposition}{Proposition}
\newtheorem{corollary}{Corollary}
\newtheorem{remark}{Remark}
\newtheorem{definition}{Definition}

\makeatletter
\renewenvironment{proof}[1][\proofname]{%
  \par\addvspace{\topsep}\pushQED{\qed}%
  \setlength{\parindent}{2em}\indent{\normalfont\bfseries #1}\hskip0.5em\relax\normalfont\ignorespaces
}{%
  \popQED\par\addvspace{\topsep}%
}
\makeatother

\newcommand{\Li}{\operatorname{Li}}

\allowdisplaybreaks[4]

\begin{document}

\title{\bfseries Complete Asymptotic Expansion of the Additive Mertens Sum $S_k(x)$}
\author{Daoyi Peng \and Hao Liu}
\date{July 5, 2026}
\maketitle

\renewcommand{\thefootnote}{}
\footnotetext{\emph{2020 Mathematics Subject Classification}: 11N05, 11M06, 33B30, 41A60.}
\footnotetext{\emph{Keywords}: Mertens sum, prime number, asymptotic expansion, multiple polylogarithm, Dirichlet eta function, complete homogeneous symmetric polynomial.}
\renewcommand{\thefootnote}{\arabic{footnote}}

\begin{abstract}
Let $p_1, \dotsc, p_k$ be primes not exceeding $x$ ($k \geqslant 2$), and define the additive Mertens sum
\[
 S_k(x) = \sum_{p_1 \leqslant x} \cdots \sum_{p_k \leqslant x} \frac{1}{p_1 + \dotsm + p_k}.
\]
In contrast to Tenenbaum's generalized (multiplicative) Mertens sum, whose leading term has order $(\log \log x)^k$, the sum $S_k(x)$ has leading term of order $x^{k-1}/\log^k x$. We establish the complete asymptotic expansion
\[
S_k(x) = \frac{x^{k-1}}{\log^k x} \sum_{n=0}^{N} \frac{E_{k,n}}{\log^n x} + O\left(\frac{x^{k-1}}{\log^{k+N+1} x}\right) \quad (\forall\, N \geqslant 0),
\]
where the coefficients are given by absolutely convergent multiple logarithmic integrals
\[
E_{k,n} = (-1)^n \int_{(0,1]^k} \frac{h_n(\log t_1, \dotsc, \log t_k)}{t_1 + \dotsm + t_k}\, \mathrm{d}\mathbf{t},
\]
with $h_n$ the complete homogeneous symmetric polynomial of degree $n$. We give closed-form expressions for the first two coefficients $E_{k,0}$ and $E_{k,1}$ for all $k$, and obtain the closed form for the diagonal part of the third coefficient $E_{k,2}$ (with $E_{k,2}$ fully explicit for $k \leqslant 4$); consequently, the first three terms of the expansions of $S_2(x)$ and $S_3(x)$ are fully explicit. For $k = 2$, we further obtain a closed-form expression for the entire sequence $\{E_{2,n}\}_{n \geqslant 0}$, whose values are explicit $\mathbb{Q}$-linear combinations of $\log 2$ and zeta values $\zeta(j)$. The proofs rely on the real-variable form of the prime number theorem, variable rescaling, and multivariate Taylor remainder estimates.
\end{abstract}

\section{Introduction}

\subsection{Background and the Problem}
Sums of reciprocals of primes are a classical topic in analytic number theory. Mertens' second theorem gives
\[
\sum_{p \leqslant x} \frac{1}{p} = \log \log x + c_1 + O \left(\frac{1}{\log x}\right),
\]
where $c_1 = \gamma - \sum_p \{\log(1 - 1/p)^{-1} - 1/p\}$ is the Mertens constant and $\gamma$ is the Euler–Mascheroni constant.

The multiplicative analogue of Mertens' second theorem has attracted considerable recent attention. For the double and triple cases $k = 2, 3$, Popa \cite{Popa1, Popa2} obtained asymptotic evaluations of $\sum_{p_1 \dotsm p_k \leqslant x} 1/(p_1 \dotsm p_k)$ by elementary means, using Abel summation and the Dirichlet hyperbola method. For general $k$, Tenenbaum \cite{T2017} investigated the generalized (multiplicative) Mertens sum
\[
\widetilde{S}_{k}(x) := \sum_{p_1 \dotsm p_k \leqslant x} \frac{1}{p_1 \dotsm p_k} = P_k(\log \log x) + O\left(\frac{(\log \log x)^{k-1}}{\log x}\right),
\]
whose main term is a polynomial of degree $k$ in $\log \log x$, with coefficients given by the derivatives $(1/\Gamma)^{(m)}(1)$ of the reciprocal Gamma function; the proof uses the Selberg–Delange method \cite{TGSM}. Subsequently, Qi and Hu \cite{QH2019} recovered and extended Tenenbaum's evaluations by elementary methods, again based on Abel summation and the Dirichlet hyperbola method. All of these works, however, remain within the multiplicative setting.

In this paper, we consider the additive Mertens sum
\begin{equation}\label{eq:Sk}
S_k(x):= \sum_{p_1 \leqslant x} \dotsm \sum_{p_k \leqslant x}\frac{1}{p_1+\dotsb+p_k} \quad (x>2),
\end{equation}
where $p_1, \dotsc, p_k$ are primes not exceeding $x$, $k \geqslant 2$. To the best of our knowledge, this additive analogue has not previously been studied in the literature. Unlike the multiplicative case, the additive sum has leading term of order $S_k(x) \asymp x^{k-1}/\log^k x$. Indeed, by the AM–GM inequality, $p_1 + \dotsb + p_k \geqslant k \sqrt[k]{p_1 \dotsm p_k}$, so
\[
S_k(x) \leqslant \frac{1}{k} \sum_{p_1 \leqslant x} \dotsm \sum_{p_k \leqslant x} \frac{1}{\sqrt[k]{p_1 \dotsm p_k}} = \frac{1}{k} \left(\sum_{p \leqslant x} p^{-1/k}\right)^k.
\]
Applying the prime number theorem and Stieltjes integration to $\sum_{p \leqslant x} p^{-1/k}$ yields
\[
\sum_{p \leqslant x} p^{-1/k} \sim \frac{x^{1-1/k}}{(1 - 1/k) \log x},
\]
and hence
\[
S_k(x) \leqslant \left(\frac{k^{k-1}}{(k-1)^k} + o(1)\right) \frac{x^{k-1}}{\log^k x}.
\]
On the other hand, since $p_1 + \cdots + p_k \leqslant kx$,
\[
S_k(x) \geqslant \frac{1}{kx} \left(\sum_{p \leqslant x} 1\right)^k = \frac{\pi(x)^k}{kx} = \frac{1 + o(1)}{k} \cdot \frac{x^{k-1}}{\log^k x}.
\]
The natural question is: what is the exact leading coefficient of $S_k(x)$? Can one obtain a finer asymptotic expansion? We provide a complete answer below.

\subsection{Main Results}

We shall see that the coefficients in the asymptotic expansion of $S_k(x)$ are naturally expressed as multiple logarithmic integrals over the unit cube.

\begin{definition}
Let $h_n$ be the complete homogeneous symmetric polynomial of degree $n$ in $k$ variables,
\[
h_n(y_1, \ldots, y_k) = \sum_{\substack{n_1 + \dotsb + n_k = n \\ n_j \geqslant 0}} \prod_{j=1}^{k} y_j^{n_j} \quad (h_0 = 1).
\]
Define
\begin{equation}\label{eq:Ekn}
E_{k,n} := (-1)^{n} \int_{(0,1]^k} \frac{h_n(\log t_1, \dotsc, \log t_k)}{t_1 + \dotsb + t_k} \, \mathrm{d}\mathbf{t} \quad (n \geqslant 0).
\end{equation}
\end{definition}

\begin{theorem}[Complete asymptotic expansion]\label{thm:main}
Let $k \geqslant 2$ be fixed. Then for any fixed integer $N \geqslant 0$, as $x \to \infty$,
\begin{equation}\label{eq:mainexp}
S_k(x) = \frac{x^{k-1}}{\log^k x} \sum_{n=0}^{N} \frac{E_{k,n}}{\log^n x}
+ O_{k,N}\Big(\frac{x^{k-1}}{\log^{k+N+1}x}\Big),
\end{equation}
where each $E_{k,n}$ is given by \eqref{eq:Ekn} and is absolutely convergent.
\end{theorem}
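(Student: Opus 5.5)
We write the sum as a $k$-fold Stieltjes integral against $\pi(t)$, replace $\pi$ by $\operatorname{li}$ up to an error $O(x\exp(-c\sqrt{\log x}))$, and then rescale $u_j=xt_j$. Concretely,
\[
S_k(x)=\int_{[2^-,x]^k}\frac{d\pi(u_1)\cdots d\pi(u_k)}{u_1+\dots+u_k}.
\]
Write $\pi=\operatorname{li}+R$ with $R(u)\ll u\exp(-c\sqrt{\log u})$. Expanding the product of measures gives the main term plus cross terms containing at least one $dR$.

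For a cross term with $dR(u_1)$, first integrate by parts in $u_1$. The boundary terms and the integral of $R(u_1)/(u_1+\dots)^2$ are bounded by inserting $|R(u)|\ll u/\log^{A}u$ and using $\sum_j u_j\ge u_1$. The remaining variables are handled with the crude bound $\int_2^x d\operatorname{li}$ or $d\pi\ll x/\log x$, plus AM–GM type estimates. This shows each cross term is $\ll x^{k-1}/\log^{A}x$ for any $A$. This step is routine but needs care near $u_1$ small, where $R$ is not small relative to $u_1$; there one uses simply $|R(u)|\ll u$, and the region $u_1\le x/\log^{B}x$ carries at most a $\log^{-B}$ saving.

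The main term is
\[
M=\int_{[2,x]^k}\frac{du_1\cdots du_k}{(u_1+\dots+u_k)\prod\log u_j}.
\]
Substituting $u_j=xt_j$ and writing $L=\log x$ gives
\[
M=x^{k-1}\int_{[2/x,1]^k}\frac{d\mathbf t}{(t_1+\dots+t_k)\prod_j(L+\log t_j)}.
\]
Split the cube into the region $\Omega$ where all $t_j\ge \exp(-\sqrt L)$, say, and its complement. On $\Omega$ we have $|\log t_j|/L\le L^{-1/2}<1/2$. Expanding
\[
\prod_j(1+\log t_j/L)^{-1}=\sum_{n\le N}(-1)^nh_n(\log t_1,\dots,\log t_k)L^{-n}+\text{remainder},
\]
the remainder, by the multivariate Taylor bound, is $\ll L^{-N-1}(\sum_j|\log t_j|)^{N+1}$. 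This identity holds because the generating function of the $h_n$ is $\prod_j(1-y_j z)^{-1}$.

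Integrating against $1/\sum t_j$ yields the $E_{k,n}L^{-k-n}$ together with an error. The key facts are:
\begin{enumerate}
\item $\int_{(0,1]^k}|\log t|^{m}/\sum t_j\,d\mathbf t<\infty$. This follows from $\sum t_j\ge k(\prod t_j)^{1/k}$ and $\int_0^1 t^{-1/k}|\log t|^m\,dt<\infty$; it gives absolute convergence of each $E_{k,n}$ and of the remainder.
\item The tails $\int_{(0,1]^k\setminus\Omega}|\log t|^m/\sum t_j$ are $\ll \exp(-(1-1/k)\sqrt L)L^{O(1)}$, by the same AM–GM bound.
\end{enumerate}

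The main obstacle is the complement region $[2/x,1]^k\setminus\Omega$. There $1/(L+\log t_j)$ can be as large as $1/\log 2$, so no expansion is available. We bound it directly: $1/\prod(L+\log t_j)\le (\log 2)^{-k}$, and together with the AM–GM bound the integrand integrates over the region where some $t_j<\exp(-\sqrt L)$ to $\ll \int_0^{e^{-\sqrt L}}t^{-1/k}\,dt\ll e^{-(1-1/k)\sqrt L}$. This is negligible against $L^{-k-N-1}$. Combining the main term, the Taylor remainder, the tails, and the $\pi-\operatorname{li}$ cross terms gives \eqref{eq:mainexp}.
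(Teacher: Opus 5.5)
Your proof is correct. It shares the paper's architecture: the Stieltjes representation, $\mathrm{d}\pi=\mathrm{d}u/\log u+\mathrm{d}R$, the rescaling $u_j=xt_j$, and Taylor expansion of $\prod_j(1+w_j)^{-1}$ where all $t_j$ are not too small. The supporting estimates are different.

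\emph{Absolute convergence, tails and bad region.} The paper decomposes $(0,1]^k$ by the largest coordinate and rescales $t_j=t_1r_j$, which yields the weight $t_1^{k-2}$. It bounds the bad region $B_\delta$ separately in the $u$-variables by $x^{k-1-\delta}$. It then needs a second pass through the $\Omega_i$-decomposition to replace $\int_{G_\delta}$ by $\int_{(0,1]^k}$. Your AM--GM bound $1/\sum_j t_j\leqslant k^{-1}\prod_j t_j^{-1/k}$ factorizes the integrand instead. Convergence, tails and the bad region then all reduce to $\int_0^\varepsilon t^{-1/k}|\log t|^a\,\mathrm{d}t\ll\varepsilon^{1-1/k}\log^a(1/\varepsilon)$, which is shorter. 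Whether the cutoff is $\mathrm{e}^{-\sqrt L}$ or $x^{-\delta}$ is immaterial.

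\emph{Terms containing $\mathrm{d}R$.} The paper integrates by parts in every variable of $J$ and runs a three-case analysis, reaching $x^{k-1}\mathrm{e}^{-c\sqrt{\log x}}$. You integrate by parts once and dominate the remaining $\mathrm{d}R$'s by $\mathrm{d}\pi+\mathrm{d}\,\mathrm{li}$. This gives only a power-of-log saving, but that is all the theorem needs.

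\emph{The step to spell out.} The region $u_1\leqslant X:=x/\log^B x$ is left too vague. There the mass bound $(x/\log x)^{k-1}$ for the other variables alone gives only $x^{k-1}/\log^{k-1}x$, which is too weak. You must use the decay of $1/\sum u$ in the other variables, and your own AM--GM device does this. From $|R(u_1)|/(\sum u)^2\ll 1/\sum u\leqslant k^{-1}\prod_j u_j^{-1/k}$, together with $\int_2^X u^{-1/k}\,\mathrm{d}u\ll X^{1-1/k}$ and $\int_2^x u^{-1/k}\,(\mathrm{d}\pi+\mathrm{d}\,\mathrm{li})(u)\ll x^{1-1/k}/\log x$, this region contributes $\ll x^{k-1}(\log x)^{-B(1-1/k)-(k-1)}$. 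The boundary term at $u_1=2$ is $\ll x^{k-2}\log\log x$, using $1/(2+s)\leqslant 1/(2+u_2)$.
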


Theorem \ref{thm:main} reduces the problem to computing the coefficients $E_{k,n}$. The following theorem describes the arithmetic structure of the first three coefficients.

\begin{theorem}[Closed forms for the first three coefficients]\label{thm:coeff}
Let $H_m=\sum_{i \leqslant m}1/i$. Then:
\begin{enumerate}[label={(\roman*)},leftmargin=2.2em]
\item (Leading term, weight $1$)\; 
$\displaystyle E_{k,0}=\frac1{(k-1)!}\sum_{j=2}^{k}(-1)^{k+j}j^{k-1}\binom{k}{j}\log j$.

\noindent Here ``weight'' refers to the maximum order $s$ of the polylogarithms $\mathrm{Li}_s$ appearing in the closed form (with the convention that $\log = \mathrm{Li}_1$ has weight $1$).
\item (Second order, weight $2$)\; In the basis\footnote{For convenience, we call $\{\pi^2, \log m, \mathrm{Li}_{2}(-1/m)\}$ a basis; we do not presuppose its $\mathbb{Q}$-linear independence. If linear relations exist, the coefficients below correspond to the representation under one particular choice.} $\{\pi^2, \log m, \mathrm{Li}_2(-1/m)\}$,
\[
E_{k,1}=(-1)^{k} \frac{k}{12(k-2)!}\pi^2
+\sum_{m=2}^{k} C_{\log}(k,m)\log m
+\sum_{m=2}^{k-1} (-1)^{k-m}\frac{k\,m^{k-1}}{(k-1)!} \binom{k-1}{m} \Li_2 \Big(-\frac{1}{m}\Big),
\]
where
\[
C_{\log}(k,m)=(-1)^{k-m} \binom{k}{m} [(k-m)\mathcal{C}_k(m)-m \mathcal{B}_k(m-1)],
\]
and $\mathcal{B}_k$, $\mathcal{C}_k$ are defined in \S\ref{sec:Ek1}.
\item (Third order, weight $3$) $E_{k,2}=k A_k+ \binom{k}{2} B_k$. Let $[F]$ denote the rational coefficient of $F$ in the closed form of $k A_k$. The diagonal part $k A_k$ admits a closed form for all $k$:
\[
[\zeta(3)]=(-1)^{k} \frac{3k}{2(k-2)!}, \quad
[\pi^2]=(-1)^{k} \frac{kH_{k-1}}{6(k-2)!}, \quad
\Big[\mathrm{Li}_3\Big(-\frac{1}{m}\Big)\Big]=(-1)^{k-m}\frac{2k\,m^{k-1}}{(k-1)!}\binom{k-1}{m},
\]
and $[\mathrm{Li}_2(-1/m)]=H_{k-1}[\mathrm{Li}_3(-1/m)]$ for $2 \leqslant m \leqslant k-1$. The coefficients of the logarithmic terms in $k A_k$ are given by the explicit finite sum $[\log]_A$ displayed in \S\ref{sec:Ek2}. The cross part $B_k$ is given by an absolutely convergent integral and also admits an explicit closed form for $k \leqslant 4$ (for $k=4$, within the $\mathbb{Q}$-algebra generated by $\mathcal{L}_3$; see Proposition \ref{prop:Bk}).
\end{enumerate}
\end{theorem}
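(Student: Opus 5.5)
The plan is to compute the integrals \eqref{eq:Ekn} for $n=0,1,2$ by reducing the $k$-dimensional integral over the cube to one-dimensional integrals against the distribution of $t_1+\dotsb+t_k$. Write $u=t_1+\dotsb+t_k$. Let $f_k(u)$ be the Irwin–Hall density of the sum of $k$ independent uniform $[0,1]$ variables:
\[
f_k(u)=\frac{1}{(k-1)!}\sum_{j=0}^{k}(-1)^j\binom{k}{j}(u-j)_+^{k-1}.
\]
Then
\[
E_{k,0}=\int_0^k \frac{f_k(u)}{u}\,du .
\]
Split this integral at the integers $m\in[0,k]$. On each piece, expand $(u-j)^{k-1}/u$ as a polynomial plus $(-j)^{k-1}/u$. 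The polynomial contributions cancel by the standard vanishing of $\sum_j(-1)^j\binom{k}{j}j^r$ for $r<k$. The $1/u$ terms integrate to $\log(k/j)$. Collecting the coefficient of $\log j$ then gives (i), since the $\log k$ contributions combine into the $j=k$ term.

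For $E_{k,1}$ and the diagonal part of $E_{k,2}$, I would use symmetry. By symmetry of the integrand, $E_{k,1}=-k\int \log t_1/(t_1+\dotsb+t_k)\,d\mathbf t$. For $h_2$, write $h_2=\sum_i y_i^2+\sum_{i<j}y_iy_j$, which gives $E_{k,2}=kA_k+\binom{k}{2}B_k$ with
\[
A_k=\int\frac{\log^2 t_1}{u}\,d\mathbf t,\qquad B_k=\int\frac{\log t_1\log t_2}{u}\,d\mathbf t .
\]
For the one-variable-log integrals, condition on $t_1$: integrate $\log^r t_1$ against $g(t_1)=\int_0^{k-1} f_{k-1}(v)\,dv/(t_1+v)$. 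By the same piecewise expansion, $g(t)$ equals a polynomial in $t$ plus the sum
\[
\frac{1}{(k-2)!}\sum_{m}(-1)^m\binom{k-1}{m}(-t-m)^{k-2}\log\frac{t+k-1}{t+m}
\]
(up to boundary conventions). It then suffices to compute
\[
\int_0^1 t^a\log^r t\,\log(t+m)\,dt,\qquad r=1,2 .
\]
Integrating by parts and substituting $t=ms$ produces $\Li_{r+1}(-1/m)$, lower-weight $\Li$'s, and logarithms. The $m=0$ term, $\int t^{k-2}\log^r t\log t\,dt$, together with the $m=1$ term $\log(1+t)$, produces $\zeta(2)$ and $\zeta(3)$ through $\Li_s(-1)=-(1-2^{1-s})\zeta(s)$. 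Harmonic numbers $H_{k-1}$ enter from $\int_0^1 t^{a}\log t\,dt$-type moments, with the factorials combining. The constants $\mathcal B_k,\mathcal C_k$ in (ii) are defined to be whatever finite rational sums this bookkeeping produces for the $\log m$ coefficients. The claimed identity $[\Li_2(-1/m)]=H_{k-1}[\Li_3(-1/m)]$ should appear because the $\Li_2$ terms arise from differentiating the Mellin-type moment $\int t^{s}\log(t+m)$ in $s$ at $s=k-1$.

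For $B_k$, I would condition on $(t_1,t_2)$, which gives a two-variable integral of $\log t_1\log t_2\log(\text{linear})$ times polynomials. For $k=2,3$ this can be reduced by the substitution $t_2=t_1 s$ to one-dimensional polylog integrals and evaluated directly. For $k=4$ the same procedure gives values in the algebra generated by weight-$\le3$ polylogs at arguments built from $1/2,1/3$, which I would take to be $\mathcal{L}_3$, the set defined before Proposition \ref{prop:Bk}. That proposition would be invoked for the $k=4$ case. The rational coefficients I would verify numerically against quadrature of \eqref{eq:Ekn}.

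The main obstacle is the bookkeeping of cancellations. Polynomial parts must cancel via finite-difference identities, and the boundary terms at $u=m$ must combine so that every $\log$ coefficient collapses to $C_{\log}(k,m)$. I would control this by writing every piece as a finite difference $\Delta^k$ applied to an explicit antiderivative $F(u)=u^{k-1}\log u\cdot(\ldots)$, so that the final answer reads $\sum_j(-1)^j\binom kj F(j)$. The claimed closed forms then follow by reading off coefficients.
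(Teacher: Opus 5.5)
Your part (i) is correct, and it takes a different route from the paper. Integrating $1/u$ against the Irwin--Hall density and killing the polynomial pieces with $\sum_j(-1)^j\binom kj j^r=0$ $(r<k)$ amounts to $E_{k,0}=\Delta^kF(0)$ with $F(u)=u^{k-1}\log u/(k-1)!$. The paper instead writes $E_{k,0}=\int_0^\infty\bigl((1-\mathrm{e}^{-x})/x\bigr)^k\,\mathrm{d}x$ and uses $k-1$ integrations by parts plus Frullani; yours is arguably more direct. Your conditioning on $t_1$ is also sound, and the polynomial part of $g$ in fact vanishes: $g(t)=\frac{1}{(k-2)!}\Delta^{k-1}[s^{k-2}\log s](t)$. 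Hence $-E_{k,1}/k=\Delta^{k-1}\Psi(0)$ and $A_k=\Delta^{k-1}\Phi(0)$, where $\Psi(x)=\frac{1}{(k-2)!}\int_0^1\log t\,(t+x)^{k-2}\log(t+x)\,\mathrm{d}t$ and $\Phi$ is the same with $\log^2 t$. This is the paper's reduction with the order of integration reversed: the paper integrates $t_1$ first, obtaining $\Li_2(-1/\sigma)$ or $G_1(\sigma)$, and then applies \eqref{eq:diff-int} in $\sigma$.

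The gap is that for (ii) and the diagonal part of (iii) you never derive the stated coefficients.
\begin{itemize}
\item You propose to \emph{define} $\mathcal B_k,\mathcal C_k$ as whatever the bookkeeping yields. The theorem fixes them: $\mathcal C_k(x)=H_{k-1}x^{k-1}/(k-1)!$, $\mathcal B_k(x)=-\frac{1}{(k-1)!}\sum_{n=1}^{k-1}c_{k,n}(x+1)^n$, and $[\log]_A$ is fixed via $\mathcal C_k^{\pm}$. As written, the logarithmic part of (ii) is therefore tautological.
\item The $\Li_2(-1/m)$, $\pi^2$ and $\zeta(3)$ coefficients are only said to follow ``by reading off''.
\item The relation $[\Li_2(-1/m)]=H_{k-1}[\Li_3(-1/m)]$ rests on a heuristic about differentiating a Mellin moment, which is not an argument.
\end{itemize}

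The missing idea is the paper's generating function. Note that $\Psi^{(k-1)}(x)=\int_0^1\frac{\log t}{t+x}\,\mathrm{d}t=\Li_2(-1/x)$ and $\Phi^{(k-1)}(x)=-2\Li_3(-1/x)$. You may therefore replace $\Psi,\Phi$ by any explicit $(k-1)$-fold antiderivative, since polynomials of degree $\leqslant k-2$ are annihilated by $\Delta^{k-1}$; this is also why your rational leftovers cancel.

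For $\Psi$, take the ansatz $\frac{x^{k-1}}{(k-1)!}\Li_2(-1/x)+\mathcal B(x)\log(x+1)+\mathcal C(x)\log x$. It forces $\mathcal B_k'=\mathcal B_{k-1}-x^{k-2}/(k-1)!$ and $\mathcal C_k'=\mathcal C_{k-1}+x^{k-2}/(k-1)!$, with $\mathcal B_k(-1)=\mathcal C_k(0)=0$, and these give exactly the stated polynomials. The coefficient of $\log m$ then comes from $\mathcal C_k\log x$ at $x=m$ and $\mathcal B_k\log(x+1)$ at $x=m-1$. The two routes are merged by $k\binom{k-1}{m}=(k-m)\binom km$ and $k\binom{k-1}{m-1}=m\binom km$.

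For $\Phi$, use $\frac{\mathrm{d}}{\mathrm{d}x}[x^j\Li_3(-1/x)]=jx^{j-1}\Li_3(-1/x)-x^{j-1}\Li_2(-1/x)$. In a $j$-fold antiderivative of $-2\Li_3(-1/x)$, the coefficients $a_j,b_j$ of $x^j\Li_3(-1/x)$ and $x^j\Li_2(-1/x)$ then satisfy $ja_j=a_{j-1}$ and $jb_j=b_{j-1}+a_j$, with $b_0=0$. Hence $b_j=H_ja_j$, which is the actual source of $\beta_k=H_{k-1}\alpha_k$. In your term-by-term expansion this shows up as $\sum_{a=0}^{k-2}\binom{k-2}{a}\frac{(-1)^a}{(a+1)^2}=\frac{H_{k-1}}{k-1}$ versus $\sum_{a=0}^{k-2}\binom{k-2}{a}\frac{(-1)^a}{a+1}=\frac{1}{k-1}$.

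Your handling of $B_k$ (direct evaluation for $k\leqslant 3$, Proposition \ref{prop:Bk} for $k=4$, numerical checks) is on the same footing as the paper's.
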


As a corollary, substituting $k = 2, 3$ from Theorem \ref{thm:coeff} into Theorem \ref{thm:main} yields the explicit form of the first three terms in the expansions of $S_2(x)$ and $S_3(x)$.
\begin{corollary}\label{cor:S2}
As $x \to \infty$, 
\begin{equation}\label{eq:S2}
S_2(x) = \frac{x}{\log^2 x}\left[2\log 2
+\frac{\tfrac{\pi^2}{6} + 4 \log 2}{\log x}
+\frac{\tfrac{\pi^2}{2} + 3 \zeta(3) + 12 \log 2}{\log^2 x}
+ O\Big(\frac1{\log^3 x}\Big)\right].
\end{equation}
\end{corollary}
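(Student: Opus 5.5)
The plan is to substitute $k=2$ into Theorem \ref{thm:main} with $N=2$. This reduces the corollary to evaluating the three constants
\[
E_{2,0}=2\log 2,\qquad E_{2,1}=\tfrac{\pi^2}{6}+4\log 2,\qquad E_{2,2}=\tfrac{\pi^2}{2}+3\zeta(3)+12\log 2 .
\]
The error term $O(x/\log^{5}x)$ then becomes the bracketed $O(1/\log^3 x)$. Theorem \ref{thm:coeff}(i) already gives $E_{2,0}=2\log 2$. Part (ii) supplies the $\pi^2/6$ in $E_{2,1}$, since the $\mathrm{Li}_2$ sum is empty for $k=2$. For $E_{2,1}$ I would do a direct integration rather than unwind $\mathcal{B}_2,\mathcal{C}_2$, since the integrals are elementary for $k=2$.

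The key device is the inner integral $\int_0^1 \frac{\mathrm{d}t}{s+t}=\log(1+s)-\log s$. By symmetry,
\[
E_{2,1}=-2\int_0^1 \log s\,\bigl(\log(1+s)-\log s\bigr)\,\mathrm{d}s .
\]
Two inputs are needed. The first is $\int_0^1\log^2 s\,\mathrm{d}s=2$. The second is $\int_0^1 \log s\log(1+s)\,\mathrm{d}s$, which I would compute by expanding $\log(1+s)=\sum_{n\geqslant1}(-1)^{n+1}s^n/n$ and using $\int_0^1 s^n\log s\,\mathrm{d}s=-1/(n+1)^2$. Partial fractions give the value $2-2\log 2-\pi^2/12$. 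Together these yield $E_{2,1}=\pi^2/6+4\log 2$.

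For $E_{2,2}$, note that $h_2(\log s,\log t)=\log^2 s+\log s\log t+\log^2 t$. This splits $E_{2,2}=2A_2+B_2$, matching Theorem \ref{thm:coeff}(iii). The diagonal part is handled in the same way as $E_{2,1}$:
\[
2A_2 = 2\int_0^1\log^2 s\,\bigl(\log(1+s)-\log s\bigr)\,\mathrm{d}s = 2\Bigl(\sum_{n\geqslant1}\frac{2(-1)^{n+1}}{n(n+1)^3}+6\Bigr),
\]
using $\int_0^1 s^n\log^2 s\,\mathrm{d}s=2/(n+1)^3$ and $\int_0^1\log^3 s\,\mathrm{d}s=-6$. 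Partial fractions in $1/(n(n+1)^3)$ reduce the series to $\log 2$, $\eta(2)=\pi^2/12$ and $\eta(3)=\tfrac34\zeta(3)$. Alternatively, the diagonal closed form in Theorem \ref{thm:coeff}(iii) at $k=2$ can be read off directly: it gives $[\zeta(3)]=3$ and $[\pi^2]=1/3$ for $kA_k$, with no $\mathrm{Li}_3(-1/m)$ terms, and only the $\log 2$ coefficient remains to be determined.

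The step I expect to be the main obstacle is the cross term $B_2=\int_{(0,1]^2}\frac{\log s\log t}{s+t}\,\mathrm{d}s\,\mathrm{d}t$. The difficulty is that its closed form must combine with $2A_2$ to produce exactly $\pi^2/2+3\zeta(3)+12\log 2$. The first attempt would be the substitution $t=sv$ on the region $t\leqslant s$, using the symmetry to double that region. This turns $\log t$ into $\log s+\log v$ and makes the $s$-integral elementary:
\[
B_2 = 2\int_0^1\!\!\int_0^1 \frac{\log s\,(\log s+\log v)}{1+v}\,\mathrm{d}s\,\mathrm{d}v = 2\int_0^1\frac{2-\log v}{1+v}\,\mathrm{d}v = 4\log 2+\frac{\pi^2}{6}.
\]
Here I used $\int_0^1\log^2 s\,\mathrm{d}s=2$, $\int_0^1\log s\,\mathrm{d}s=-1$ and $\int_0^1\frac{\log v}{1+v}\,\mathrm{d}v=-\pi^2/12$. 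The final step is to add $2A_2$ and $B_2$, check that the $\log 2$, $\pi^2$ and $\zeta(3)$ coefficients total $12$, $1/2$ and $3$ respectively, and then insert all three constants into \eqref{eq:mainexp}.
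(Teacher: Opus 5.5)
Your proposal is correct, and every constant checks out: $\int_0^1\log s\log(1+s)\,\mathrm{d}s=2-2\log 2-\pi^2/12$, $2A_2=8\log 2+\pi^2/3+3\zeta(3)$ (this agrees with the paper's diagonal data at $k=2$, including $[\log]_A=2\,\mathcal{C}_2^+(1)\log 2=8\log 2$), and $B_2=4\log 2+\pi^2/6$. The interchanges are justified because $\log s$, $\log^2 s$ and $\log s\log t$ have constant sign on $(0,1]^2$ (Tonelli), and the resulting series are absolutely summable.

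Your route differs from the paper's only in how the three constants are evaluated. The paper obtains the corollary by specializing Theorem~\ref{thm:coeff} at $k=2$. That means going through the Laplace/generating-function machinery ($\mathcal{H}_2$ and $\widetilde{\mathcal{H}}_2^A$, evaluated by $\Delta^1$), plus the value of $B_2$ recorded in Proposition~\ref{prop:Bk} without a written derivation. Independently, Theorem~\ref{thm:E2n} gives $E_{2,n}=2(n+1)!\sum_{j\leqslant n+1}\eta(j)/j$ for all $n$ at once, via the exponential substitution and the Mellin integral for $\eta$. Your computation is elementary and self-contained, and in particular it supplies an actual proof of $B_2$.

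Note, however, that your substitution $t=sv$ on $\{t\leqslant s\}$ is the same change of variables as $u=x-y$ in the proof of Theorem~\ref{thm:E2n} (with $v=\mathrm{e}^{-u}$). Applied to the full integrand $h_2(\log s,\log s+\log v)$, it gives directly
\[
E_{2,2}=2\int_0^1\frac{6-3\log v+\log^2 v}{1+v}\,\mathrm{d}v=12\log 2+\frac{\pi^2}{2}+3\zeta(3).
\]
So neither the diagonal/cross split nor the partial-fraction series is needed, and the cross term is not really the obstacle you anticipated.
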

\begin{corollary}\label{cor:S3}
As $x \to \infty$,
\begin{equation}\label{eq:S3}
S_3(x) = \frac{x^{2}}{\log^3 x}\left[E_{3,0}+\frac{E_{3,1}}{\log x}+\frac{E_{3,2}}{\log^2 x}
+O\Big(\frac1{\log^3 x}\Big)\right],
\end{equation}
where
\begin{align*}
E_{3,0} & =\frac{9}{2} \log 3 - 6 \log 2, \\
E_{3,1} & =-\frac{\pi^2}{4} - 21 \log 2 + \frac{63}{4} \log 3 - 6 \Li_2 \Big(-\frac{1}{2}\Big), \\
E_{3,2} & = -\frac{41}{8} \zeta(3)-\frac{5}{4} \pi^2 - 78 \log 2 + \frac{117}{2} \log 3
        -12 \Li_3\!\Big(-\frac{1}{2}\Big) -30 \Li_2 \Big(-\frac{1}{2}\Big).
\end{align*}
\end{corollary}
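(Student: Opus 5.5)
The corollary follows by specializing Theorem~\ref{thm:coeff} to $k=3$ and inserting the resulting constants into Theorem~\ref{thm:main} with $N=2$. The error term $O(x^{2}/\log^{6}x)$ is exactly $\frac{x^2}{\log^3 x}\cdot O(1/\log^3 x)$, so the only work is to identify $E_{3,0},E_{3,1},E_{3,2}$.

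\emph{Leading coefficient.} Part (i) with $k=3$ has two terms. The term $j=2$ gives $(-1)^{5}\,2^{2}\binom32\log2=-12\log2$. The term $j=3$ gives $3^{2}\log3=9\log3$. Dividing by $(k-1)!=2$ yields $E_{3,0}=\tfrac92\log3-6\log2$.

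\emph{Second coefficient.} Part (ii) gives the $\pi^2$ coefficient $(-1)^3\cdot\frac{3}{12\cdot1!}=-\frac14$. The only $\Li_2$ term is $m=2$, with coefficient $(-1)^{1}\frac{3\cdot 2^{2}}{2!}\binom22=-6$. The logarithmic coefficients $C_{\log}(3,2)$ and $C_{\log}(3,3)$ require the explicit values of $\mathcal B_3(1),\mathcal B_3(2),\mathcal C_3(2),\mathcal C_3(3)$ from \S\ref{sec:Ek1}. I would evaluate these finite sums directly and check that they give $-21$ and $\tfrac{63}{4}$.

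\emph{Third coefficient.} By part (iii), $E_{3,2}=3A_3+3B_3$.

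For the diagonal part $3A_3$, the stated formulas give:
\begin{itemize}
\item $[\zeta(3)]=-\tfrac92$;
\item $[\pi^2]=-\tfrac{3H_2}{6}=-\tfrac34$;
\item $[\Li_3(-\tfrac12)]=-\tfrac{2\cdot3\cdot4}{2}=-12$;
\item $[\Li_2(-\tfrac12)]=H_2\cdot(-12)=-18$.
\end{itemize}
The $\log2$ and $\log3$ coefficients come from the finite sum $[\log]_A$ in \S\ref{sec:Ek2}.

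The cross part $3B_3$ is supplied by Proposition~\ref{prop:Bk}. Comparing with the claimed answer, it must contribute $-\tfrac58\zeta(3)-\tfrac12\pi^2-12\Li_2(-\tfrac12)$ together with the remaining logarithmic terms. Adding the diagonal and cross contributions then gives the stated $E_{3,2}$.

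\emph{Main obstacle and check.} The delicate step is bookkeeping for the cross part $B_3$ and for the logarithmic coefficients. Polylogarithm identities such as $\Li_2(-\tfrac12)$ versus $\Li_2(\tfrac13)$ and $\log$ relations can move terms between basis elements, so I would first rewrite the output of Proposition~\ref{prop:Bk} in the basis $\{\zeta(3),\pi^2,\log m,\Li_2(-1/m),\Li_3(-1/m)\}$. As a safeguard, I would numerically evaluate the defining integral \eqref{eq:Ekn} for $k=3$, $n=0,1,2$ to several digits and compare with the closed forms. This guards against sign or convention errors in the assembled constants.
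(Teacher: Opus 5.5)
Your proposal is correct and is the paper's own argument: specialize Theorem~\ref{thm:coeff} at $k=3$ and insert the constants into Theorem~\ref{thm:main} with $N=2$. The arithmetic you deferred does close up: $\mathcal{B}_3(x)=\tfrac12(x+1)-\tfrac34(x+1)^2$ and $\mathcal{C}_3(x)=\tfrac34x^2$ give $C_{\log}(3,2)=-21$ and $C_{\log}(3,3)=\tfrac{63}{4}$; moreover $[\log]_A=-45\log 2+\tfrac{135}{4}\log 3$, and adding the logarithmic part $-33\log 2+\tfrac{99}{4}\log 3$ of $3B_3$ gives $-78\log 2+\tfrac{117}{2}\log 3$ (read that part directly off Proposition~\ref{prop:Bk}, which already uses this basis, rather than inferring it from the target).
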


\subsection{Notation}

Throughout, $p$ denotes a prime, and $\pi(x) = \sum_{p \leqslant x} 1$ is the prime counting function. We write $L := \log x$. $\mathrm{Li}_s$ denotes the polylogarithm of order $s$, and $\Li(x) = \int_{2}^{x} \mathrm{d}t/\log t$ the logarithmic integral. $\gamma$ is the Euler–Mascheroni constant.  $\zeta(s)$ is the Riemann zeta function. We write $f \ll g$ synonymously with $f = O(g)$, and use subscripts to indicate dependence of the implied constant. For a finite set $S$, let $\# S$ denote its cardinality. Finally, $\mathrm{d}\mathbf{t} = \mathrm{d}t_1 \cdots \mathrm{d}t_k$.

\section{The Coefficient Integrals \texorpdfstring{$E_{k,n}$}{Ekn} and Their Convergence}\label{sec:tools}

\subsection{Absolute Convergence}
\begin{lemma}\label{lem:conv}
For all $k \geqslant 2$ and $n \geqslant 0$, the integral \eqref{eq:Ekn} is absolutely convergent, and
\[
|E_{k,n}| \leqslant I_n: = \int_{(0,1]^k}\frac{|\log(t_1 \dotsm t_k)|^{n}}{t_1+ \dotsb + t_k}\mathrm{d}\mathbf{t} < \infty.
\]
\end{lemma}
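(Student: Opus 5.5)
The plan has two parts: first dominate the integrand of \eqref{eq:Ekn} pointwise by the integrand of $I_n$, and then show $I_n<\infty$ by reducing to a single variable.

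\textbf{Pointwise domination.} On $(0,1]^k$ every $\log t_j\leqslant 0$. Put $y_j=-\log t_j\geqslant 0$. Since $h_n$ is homogeneous of degree $n$ with nonnegative coefficients,
\[
|h_n(\log t_1,\dotsc,\log t_k)| = h_n(y_1,\dotsc,y_k).
\]
Every monomial of $h_n$ also appears in the multinomial expansion of $(y_1+\dotsb+y_k)^n$, and there with coefficient at least $1$. Hence
\[
h_n(y_1,\dotsc,y_k)\leqslant (y_1+\dotsb+y_k)^n=|\log(t_1\dotsm t_k)|^n .
\]
Dividing by the positive quantity $t_1+\dotsb+t_k$ gives domination of the integrand of \eqref{eq:Ekn} by that of $I_n$. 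This yields $|E_{k,n}|\leqslant I_n$, with absolute convergence of \eqref{eq:Ekn} following once $I_n<\infty$.

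\textbf{Finiteness of $I_n$.} By AM--GM, $t_1+\dotsb+t_k\geqslant k(t_1\dotsm t_k)^{1/k}$, so the integrand of $I_n$ is at most $\tfrac1k (t_1\dotsm t_k)^{-1/k}|\log(t_1\dotsm t_k)|^n$. Substitute $t_j=e^{-u_j}$ with $u_j\geqslant 0$, so that $\mathrm{d}\mathbf t=e^{-(u_1+\dotsb+u_k)}\mathrm{d}\mathbf u$. Writing $s=u_1+\dotsb+u_k$, we get
\[
I_n\leqslant \frac1k\int_{[0,\infty)^k} s^n e^{-(1-1/k)s}\,\mathrm{d}\mathbf u .
\]
The region $\{u\in[0,\infty)^k : u_1+\dotsb+u_k=s\}$ is a simplex of $(k-1)$-dimensional measure $s^{k-1}/(k-1)!$, so
\[
I_n\leqslant \frac1{k\,(k-1)!}\int_0^\infty s^{n+k-1}e^{-(1-1/k)s}\,\mathrm{d}s=\frac{(n+k-1)!}{k\,(k-1)!}\Big(\frac{k}{k-1}\Big)^{n+k}<\infty .
\]
This gamma integral converges because $k\geqslant2$ makes the decay rate $1-1/k$ strictly positive.

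\textbf{Where care is needed.} The only real point is the integrability near the corner $\mathbf t=0$, where both $1/(t_1+\dotsb+t_k)$ and the logarithms blow up. The AM--GM reduction to the single variable $s$ handles this cleanly. If one prefers to avoid the simplex-volume formula, the integral factorizes after the crude bound $s^n\leqslant k^n\max_j u_j^n\leqslant k^n\sum_j u_j^n$. Each resulting term is then a product of one-dimensional integrals of the form $\int_0^\infty u^m e^{-(1-1/k)u}\,\mathrm{d}u$, all of which are finite.
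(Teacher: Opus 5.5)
Your proof is correct, but it reaches the finiteness of $I_n$ by a different route than the paper. The domination step, $|h_n(\log\mathbf t)|=h_n(|\log\mathbf t|)\leqslant|\log(t_1\dotsm t_k)|^n$, is the same as the paper's Step~1.

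The paper splits $(0,1]^k$ into the regions $\Omega_i$ on which $t_i$ is the largest coordinate. On $\Omega_1$ it uses $1/\sum t\leqslant 1/t_1$ and rescales $t_j=t_1r_j$, which produces the integrable weight $t_1^{k-2}$. It then bounds $\sum_j|\log t_j|\leqslant k|\log t_1|+\sum_{j\geqslant2}|\log r_j|$ and expands multinomially into products of one-dimensional integrals $\int_0^1 t^m|\log t|^a\,\mathrm dt$.

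You instead make the kernel separable at once via AM--GM, $1/\sum t\leqslant \frac1k\prod_j t_j^{-1/k}$. After $t_j=\mathrm e^{-u_j}$ the integrand depends only on $s=\sum_j u_j$, and everything collapses to one Gamma integral. This gives the explicit bound $I_n\leqslant\frac{(n+k-1)!}{k\,(k-1)!}\bigl(\frac{k}{k-1}\bigr)^{n+k}$. Both arguments need $k\geqslant2$ at the analogous point: integrability of $t_1^{k-2}$ in the paper, positivity of the decay rate $1-1/k$ in yours.

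What your version buys:
\begin{itemize}
\item It is shorter, avoids the region decomposition, and gives an explicit constant.
\item It also handles the tail estimate the paper needs later in the proof of Theorem~\ref{thm:main}, when $\int_{G_\delta}$ is replaced by $\int_{(0,1]^k}$. The constraint $t_1<x^{-\delta}$ becomes $u_1>\delta\log x$, which forces $s>\delta\log x$. The factor $\mathrm e^{-(1-1/k)s}$ then yields a saving of $x^{-(1-1/k)\delta/2}$ directly. This is cleaner than the paper's case analysis over the $\Omega_i$ with the split $t_i<x^{-\delta/2}$ or $r_1<x^{-\delta/2}$.
\end{itemize}
The paper's approach is more hands-on, and it reuses the same $\Omega_i$ decomposition for that tail estimate.

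One wording point: $s^{k-1}/(k-1)!$ is the $s$-derivative of the volume $s^k/k!$ of $\{\mathbf u\geqslant 0:\sum_j u_j\leqslant s\}$. It is not the surface measure of the slice $\{\sum_j u_j=s\}$, which is $\sqrt{k}\,s^{k-1}/(k-1)!$. The formula you actually use, $\int_{[0,\infty)^k}f(\sum_j u_j)\,\mathrm d\mathbf u=\int_0^\infty f(s)\,\frac{s^{k-1}}{(k-1)!}\,\mathrm ds$, is correct.
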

\begin{proof}
{\it Step 1: Reduction to a non-negative integrand}. 
For $t_j \in (0, 1]$ we have $\log t_j = -|\log t_j| \leqslant 0$, so for the complete homogeneous symmetric polynomial $h_n(\mathbf{y}) = \sum_{|\boldsymbol{\beta}| = n} \prod_j y_j^{\beta_j}$ we obtain
\[
h_n(\log \mathbf{t}) = (-1)^n \sum_{|\boldsymbol{\beta}| = n} \prod_j |\log t_j|^{\beta_j} = (-1)^n h_n(|\log \mathbf{t}|).
\]
Moreover, in the multinomial expansion $\big(\sum_j z_j\big)^n = \sum_{|\boldsymbol{\beta}| = n} \binom{n}{\boldsymbol{\beta}} \prod_j z_j^{\beta_j}$, each coefficient $\binom{n}{\boldsymbol{\beta}} \geqslant 1$, so for $z_j \geqslant 0$ we have $0 \leqslant h_n(\mathbf{z}) \leqslant \big(\sum_j z_j\big)^n$. Setting $z_j = |\log t_j|$ and using $\sum_j |\log t_j| = |\log(t_1 \dotsm t_k)|$ gives
\[
|E_{k,n}| \leqslant \int_{(0,1]^k} \frac{h_n(|\log \mathbf{t}|)}{\sum t}\, \mathrm{d}\mathbf{t} \leqslant \int_{(0,1]^k} \frac{|\log(t_1 \dotsm t_k)|^n}{\sum t}\, \mathrm{d}\mathbf{t} = I_n.
\]
It thus suffices to prove $I_n < \infty$. Note that the integrand is symmetric in $t_1, \dotsc, t_k$.

{\it Step 2: Decomposition by the largest coordinate}. 
Let $\Omega_i = \{\mathbf{t} \in (0,1]^k : t_i \geqslant t_j \text{ for all } j\}$. Then $(0,1]^k = \bigcup_i \Omega_i$ (intersections being null sets), and by symmetry $\int_{\Omega_i} = \int_{\Omega_1}$, so
\[
I_n \leqslant k \int_{\Omega_1} \frac{|\log(t_1 \dotsm t_k)|^n}{\sum t}\, \mathrm{d}\mathbf{t}.
\]
On $\Omega_1$, $t_1 = \max_j t_j$, so $\sum t \geqslant t_1$ and hence $1/\sum t \leqslant 1/t_1$.

{\it Step 3: Rescaling}. 
On $\Omega_1$, set $t_j = t_1 r_j$ for $j = 2, \dotsc, k$ with $r_j \in (0, 1]$. Then, for fixed $t_1$, $\mathrm{d}t_2 \dotsm \mathrm{d}t_k = t_1^{k-1} \mathrm{d}r_2 \dotsm \mathrm{d}r_k$, while $\frac{1}{t_1} \cdot t_1^{k-1} = t_1^{k-2}$ and
\[
\sum_{j=1}^k |\log t_j| = |\log t_1| + \sum_{j=2}^k |\log t_1 + \log r_j| \leqslant k |\log t_1| + \sum_{j=2}^k |\log r_j|.
\]
Since the integrand is non-negative, Tonelli's theorem applies, yielding
\[
\int_{\Omega_1} \frac{|\log(t_1 \dotsm t_k)|^n}{\sum t}\, \mathrm{d}\mathbf{t} \leqslant \int_0^1 t_1^{k-2} \int_{(0,1]^{k-1}} \left(k |\log t_1| + \sum_{j=2}^k |\log r_j|\right)^n \prod_{j=2}^k \mathrm{d}r_j\, \mathrm{d}t_1.
\]

{\it Step 4: Reduction to elementary one-dimensional integrals}. Expanding the bracket multinomially, $\big(k|\log t_1| + \sum_{j \geqslant 2} |\log r_j|\big)^n$ is a non-negative combination of finitely many terms of the form $k^a |\log t_1|^a \prod_{j \geqslant 2} |\log r_j|^{c_j}$ with $a + \sum_j c_j = n$. Integrating term by term gives
\[
\int_{\Omega_1} (\cdots) \ll_{k, n} \sum_{a + \sum c_j = n} \left(\int_0^1 t_1^{k-2} |\log t_1|^a\, \mathrm{d}t_1\right) \prod_{j=2}^k \left(\int_0^1 |\log r_j|^{c_j}\, \mathrm{d}r_j\right).
\]
By the standard identity $\int_{0}^{1} t^m |\log t|^a\, \mathrm{d}t = a! / (m+1)^{a+1}$ for $m > -1$ (obtained by setting $t = \mathrm{e}^{-u}$), and noting that $k \geqslant 2$ ensures $m = k - 2 \geqslant 0 > -1$, each factor is finite:
\[
\int_{0}^{1} t_1^{k-2} |\log t_1|^a\, \mathrm{d}t_1 = \frac{a!}{(k-1)^{a+1}} < \infty, \qquad \int_{0}^{1} |\log r_j|^{c_j}\, \mathrm{d}r_j = c_j! < \infty.
\]
Therefore $\int_{\Omega_1} (\cdots) < \infty$, and hence $I_n \leqslant k \int_{\Omega_1} (\cdots) < \infty$.
\end{proof}

\subsection{Laplace Tools}

Substituting $1/(\sum_j t_j) = \int_{0}^{\infty} \mathrm{e}^{-x \sum t_j}\, \mathrm{d}x$ and separating by coordinates, we shall need the two single-variable integrals
\begin{equation}\label{eq:uv}
u(x)=\int_{0}^{1} \mathrm{e}^{-xt} \, \mathrm{d}t = \frac{1-\mathrm{e}^{-x}}{x}, \qquad
v(x)=\int_{0}^{1} (\log t)\mathrm{e}^{-xt}\, \mathrm{d}t = -\frac{U(x)}{x},
\end{equation}
\begin{equation}\label{eq:U}
U(x)=\int_{0}^{x} \frac{1-\mathrm{e}^{-y}}{y}\, \mathrm{d}y = \gamma + \log x + E_1(x), \qquad U'(x) = \frac{1-\mathrm{e}^{-x}}{x},
\end{equation}
where $E_1$ is the exponential integral. The Laplace transform of $U$,
\begin{equation}\label{eq:G0}
G_0(s):=\mathcal L\{U\}(s)=\frac{\log(s+1)-\log s}{s},
\end{equation}
will appear below as the right-hand side of the differential relation $\mathcal{H}_k^{(k)} = G_0$ satisfied by the generating function. We also define the forward difference
\[
\Delta^{k-1} f(0) = \sum_{j=0}^{k-1} (-1)^{k-1-j} \binom{k-1}{j} f(j),
\]
whose key property is that $\Delta^{k-1} P = 0$ for any polynomial $P$ with $\deg P \leqslant k - 2$.

\section{Proof of the Complete Asymptotic Expansion}\label{sec:asymp}

This section proves Theorem \ref{thm:main}. Fix $k \geqslant 2$ and write $L = \log x$.

\subsection{Integral Representation and Extraction of the Main Term}

Since the prime counting function $\pi(x)$ is a step function, \eqref{eq:Sk} can be written as a Stieltjes integral
\begin{equation}\label{eq:stieltjes}
S_k(x) = \int_{[2,x]^k}\frac{\mathrm{d}\pi(u_1) \dotsm \mathrm{d}\pi(u_k)}{u_1+\dotsb+u_k}.
\end{equation}
By the prime number theorem with classical error term, $\mathrm{d}\pi(u) = \mathrm{d}u/\log u + \mathrm{d}R(u)$, where
\begin{equation}\label{eq:PNT}
R(u) = \pi(u)-\Li(u)\ll u\, \mathrm{e}^{-c\sqrt{\log u}}\qquad(c>0).
\end{equation}
Substituting \eqref{eq:PNT} into \eqref{eq:stieltjes} and expanding the $k$-fold product gives
\begin{equation}\label{eq:split}
S_k(x) = \sum_{J\subseteq\{1, \dots,k\}} \mathcal E_J(x), \qquad
\mathcal E_J(x)=\int_{[2,x]^k}\frac{1}{\sum u}\prod_{j\in J}\mathrm{d} R(u_j)\prod_{j\notin J}\frac{\mathrm{d} u_j}{\log u_j}.
\end{equation}
The main term is the $J = \varnothing$ term $M_k(x) := \mathcal{E}_\varnothing(x)$; the remaining $\mathcal{E}_J$ ($J \neq \varnothing$) constitute the error.

\subsection{The Error Terms \texorpdfstring{$J\neq\varnothing$}{J≠∅} Are Negligible}

\begin{lemma}\label{lem:err}
For each non-empty $J$, $\mathcal{E}_J(x) \ll_k x^{k-1} \mathrm{e}^{-c'\sqrt{\log x}}$ for some $c' > 0$. Consequently, $\sum_{J \neq \varnothing} \mathcal{E}_J \ll_{k, A} x^{k-1}/\log^A x$ for all $A > 0$.
\end{lemma}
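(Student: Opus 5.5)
We bound $\mathcal{E}_J(x)$ by integrating by parts in each variable $u_j$ with $j \in J$, which moves the differentials $\mathrm{d}R(u_j)$ onto the kernel $1/\sum u$. Fix a non-empty $J$, and write $\sigma = u_1+\dotsb+u_k$. For a single coordinate $j\in J$ with the other variables held fixed, the Stieltjes integration by parts reads
\[
\int_{[2,x]} \frac{\mathrm{d}R(u_j)}{\sigma} = \frac{R(x)}{\sigma|_{u_j=x}} - \frac{R(2^-)}{\sigma|_{u_j=2}} + \int_2^x \frac{R(u_j)}{\sigma^2}\,\mathrm{d}u_j .
\]
Each term has the same shape: $1/\sigma$ is replaced by a kernel $\sigma^{-1-r}$ times a product of factors $R(\cdot)$ or $R(x)$, and the variable $u_j$ is either integrated against $\mathrm{d}u_j$ or frozen at an endpoint. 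Iterating this over all $j\in J$ expresses $\mathcal{E}_J$ as a finite sum, of at most $3^{|J|}$ terms. In every term the variables in some subset $J_1\subseteq J$ are frozen at $x$ or $2$, the variables in $J_2 = J\setminus J_1$ carry weights $|R(u_j)|\,\mathrm{d}u_j$, and the variables outside $J$ carry weights $\mathrm{d}u_j/\log u_j$. The kernel is $r!\,\sigma^{-1-r}$ with $r=|J_2|$, up to sign.

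The key estimates are the following. First, $|R(u)|\ll u\,\mathrm{e}^{-c\sqrt{\log u}}$ by \eqref{eq:PNT}. Second, since every $u_i\geqslant 2$ and each $u_j\leqslant\sigma$, we have $\sigma^{-1-r}\leqslant \sigma^{-1}\prod_{j\in J_2}u_j^{-1}$. Combining these, each factor $|R(u_j)|\sigma^{-1}\cdots$ for $j\in J_2$ contributes at most $\int_2^x \mathrm{e}^{-c\sqrt{\log u}}\,\mathrm{d}u \ll x\,\mathrm{e}^{-c\sqrt{\log x}/2}$; to see this, split at $u=\sqrt{x}$, which gives $\sqrt{x} + x\,\mathrm{e}^{-c\sqrt{\log x}/\sqrt2}$. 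A frozen coordinate at $x$ contributes $|R(x)|\ll x\,\mathrm{e}^{-c\sqrt{\log x}}$. A frozen coordinate at $2$ contributes $O(1)$, which is even better. Each variable outside $J$ contributes $\int_2^x \mathrm{d}u/\log u\ll x$. The one remaining factor $1/\sigma$ is bounded by $1/(\max_i u_i)$.

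The main point requiring care is to avoid losing the extra power of $x$ coming from that single factor $1/\sigma$: the crude bound $\sigma\geqslant 2k$ would only give $x^k$ in total. To handle this, I would use the symmetric argument of Lemma~\ref{lem:conv}. Restrict to the region where a given integrated coordinate $u_i$ is the largest and bound $1/\sigma\leqslant 1/u_i$. Integrating $u_i$ last, the remaining integrated coordinates range over $[2,u_i]$ and contribute $\ll u_i^{\,m}$, where $m$ is the number of remaining non-frozen coordinates. The integral in $u_i$ then contributes $\ll \int_2^x u_i^{m-1}\,\mathrm{d}u_i\ll x^m$. If all coordinates are frozen at $x$, then $\sigma\asymp x$ directly and this gives the saving of one power of $x$ on its own. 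If the maximal coordinate is a frozen one at value $x$, then $1/\sigma\leqslant 1/x$ immediately.

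In every case the total bound is $x^{k-1}$ times at least one factor $\mathrm{e}^{-c\sqrt{\log x}/2}$, because $J\neq\varnothing$. The only exception is when the sole $J$-coordinate is frozen at $2$, and there the saving is a full power of $x$, since that coordinate contributes $O(1)$ rather than $x$. Hence $\mathcal{E}_J\ll_k x^{k-1}\mathrm{e}^{-c'\sqrt{\log x}}$ with, say, $c'=c/2$. Summing over the $2^k-1$ non-empty sets $J$, and using $\mathrm{e}^{-c'\sqrt{\log x}}\ll_A \log^{-A}x$, gives the second assertion.
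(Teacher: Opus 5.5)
Your skeleton matches the paper's: integrate by parts in the $J$-variables, get the kernel $r!\,\sigma^{-1-r}$, absorb the factor $u_j$ of each $|R(u_j)|\ll u_j\mathrm{e}^{-c\sqrt{\log u_j}}$ via $u_j\leqslant\sigma$, and treat endpoints at $x$ and at $2$ separately. Your count of $3^{|J|}$ terms is in fact the accurate one. You differ from the paper in how you recover the factor $x^{-1}$ from the last $1/\sigma$ while also keeping a factor $\mathrm{e}^{-c'\sqrt{\log x}}$, and that step is not carried out.

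When some coordinate is frozen at $x$, your argument is complete: $\sigma\geqslant x$, and every other coordinate is integrated over $[2,x]$ with your one-dimensional bounds. But when all frozen coordinates sit at $2$ and $J_2\neq\varnothing$, you give two computations that do not fit together. The per-coordinate bound $\int_2^x\mathrm{e}^{-c\sqrt{\log u}}\,\mathrm{d}u$ uses the full range. The max-coordinate computation, by contrast, integrates the non-maximal coordinates over $[2,u_i]$ and bounds them by $u_i^m$ with the weights $\mathrm{e}^{-c\sqrt{\log u_j}}$ discarded. It then integrates $u_i^{m-1}$ with no weight. That yields $x^{p-1}$ (and $\log x$, not $x^0$, when $m=0$) but no exponential saving. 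So ``in every case the total bound is $x^{k-1}$ times at least one factor $\mathrm{e}^{-c\sqrt{\log x}/2}$'' is asserted, not derived.

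The repair is routine. Work in the region where the integrated coordinate $u_i$ is maximal.
\begin{itemize}
\item If $i\notin J$: integrate each $j\in J_2$ over $[2,u_i]\subseteq[2,x]$ and use your full-range bound $\ll x\,\mathrm{e}^{-c\sqrt{\log x}/2}$. The other $J^c$-coordinates give $u_i^{m'}$, and then $\int_2^x u_i^{m'-1}\,\mathrm{d}u_i\ll x^{m'}$, or $\ll\log x$ if $m'=0$.
\item If $i\in J_2$: keep its own weight in the last integral. Splitting at $\sqrt{x}$ gives $\int_2^x u_i^{m-1}\mathrm{e}^{-c\sqrt{\log u_i}}\,\mathrm{d}u_i\ll x^m\mathrm{e}^{-c\sqrt{\log x}/2}$ for $m\geqslant1$, and $O(1)$ for $m=0$.
\end{itemize}

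The paper avoids this case analysis on which coordinate is maximal. It drops all weights except that of one chosen $i_0\in S$, splits $u_{i_0}$ at $\sqrt{x}$, and uses only the crude estimate $\int_{[2,x]^p}\mathrm{d}\mathbf{u}/\sum u\ll x^{p-1}\log x$. The upper range supplies $\mathrm{e}^{-c\sqrt{\log x}/\sqrt2}$, and the lower range a power saving $x^{-1/2}$ (or $O(\log x)$ when $p=1$). Once repaired, your version amounts to reproving that crude estimate with the weights carried along.

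One smaller point: the case with no exponential saving is not only ``the sole $J$-coordinate frozen at $2$''. It is every term in which all $J$-coordinates are frozen at $2$. There the bound is $\ll x^{k-1-|J|}\log x$, which is enough; with your convention $R(2^-)=0$ these terms actually vanish.
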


\begin{proof}
Without loss of generality, let $J = \{1, \ldots, r\}$ with $r = |J| \geqslant 1$. Treating $u_{r+1}, \ldots, u_k \in [2, x]$ as parameters, we integrate by parts successively in $u_1, \ldots, u_r$ for the inner $r$-fold Stieltjes integral. Each integration by parts transfers the differential from $R$ onto the smooth kernel $K(\mathbf{u}) = (\sum u)^{-1}$, producing a boundary contribution ($R(u_i)$ evaluated at $u_i \in \{2, x\}$, with $R(2) = \pi(2) - \mathrm{Li}(2) = 1 = O(1)$ and $R(x) \ll x\, \mathrm{e}^{-c\sqrt{\log x}}$) together with an interior contribution ($\int_{2}^{x} R(u_i)\, \partial_{u_i}(\cdots)\, \mathrm{d}u_i$). After $r$ such steps, $\mathcal{E}_J$ is expressed as a sum of finitely many (at most $2^r$) terms. Writing the ``interior'' variable set of such a term as $S \subseteq J$ with $|S| = m$ and the ``boundary'' variable set as $B = J \setminus S$, the term has the form
\[
T=\pm \prod_{i\in B} R(u_i^{\ast}) \int_{[2,x]^{S\cup J^c}} \partial^{(S)}K(\mathbf{u})\prod_{i \in S}R(u_i)\,\mathrm{d}u_i \prod_{j \in J^c} \frac{\mathrm{d}u_j}{\log u_j}, \qquad u_{i}^{\ast} \in \{2,x\}.
\]
Iterating $\partial_u (\sum u)^{-1} = -(\sum u)^{-2}$, the mixed partial derivative in the distinct variables of $S$ is $\partial^{(S)} K = (-1)^m m! (\sum u)^{-(m+1)}$. Using $|R(u_i)| \ll u_i\, \mathrm{e}^{-c\sqrt{\log u_i}}$, $1/\log u_j \leqslant 1/\log 2$, and $\prod_{i \in S} u_i \leqslant (\sum u)^m$ (whence $\prod_{i \in S} u_i / (\sum u)^{m+1} \leqslant 1/\sum u$), we obtain
\begin{equation}\label{eq:Tbound}
|T|\ll_{r}\prod_{i\in B}|R(u_i^{\ast})|\int_{[2,x]^{S\cup J^c}}\frac{\prod_{i\in S}\mathrm{e}^{-c\sqrt{\log u_i}}}{\sum u}\prod_{i \in S \cup J^{c}} \mathrm{d} u,
\end{equation}
where the free variables are those in $S \cup J^c$, totaling $p := m + (k - r)$. We further use the elementary estimate
\begin{equation}\label{eq:basic}
\int_{[2,x]^{p}}\frac{\mathrm{d}u_1 \cdots \mathrm{d}u_p}{u_1+\dotsb+u_p} \ll x^{p-1}\log x \qquad (p \geqslant 1),
\end{equation}
(for $p \geqslant 2$, rescaling $u = x \mathbf{t}$ gives $\ll x^{p-1}$; for $p = 1$, it is $\log(x/2)$. When $p = 0$, i.e.\ $S = \varnothing$ and $r = k$, no integral remains: the term reduces to $\prod_{i \in B} R(u_i^{\ast}) \cdot (\sum u^{\ast})^{-1}$, which is $\ll x^{k-1}\, \mathrm{e}^{-c\sqrt{\log x}}$ if some $u_i^{\ast} = x$, and $O(1)$ if all $u_i^{\ast} = 2$.) We now estimate \eqref{eq:Tbound} in three cases.

\textbf{(a)} $B$ contains the endpoint $x$ (some $u_i^* = x$). Then $\prod_{i \in B} |R(u_i^*)| \ll x^a\, \mathrm{e}^{-c\sqrt{\log x}}$, where $a = \#\{i \in B : u_i^* = x\}$ (factors at the endpoint $2$ contribute $O(1)$). Combined with \eqref{eq:basic} and the bound $a + m \leqslant r$, we obtain
\[
|T| \ll x^a\, \mathrm{e}^{-c\sqrt{\log x}} \cdot x^{m + k - r - 1} \log x \ll x^{k-1} \log x\, \mathrm{e}^{-c\sqrt{\log x}} \ll x^{k-1}\, \mathrm{e}^{-c''\sqrt{\log x}}.
\]

\textbf{(b)} $B$ lies entirely at the endpoint $2$ and $S \neq \varnothing$. Pick $i_0 \in S$ and split according to $u_{i_0} \in [\sqrt{x}, x]$ versus $[2, \sqrt{x}]$. For the former, $\mathrm{e}^{-c\sqrt{\log u_{i_0}}} \leqslant \mathrm{e}^{-c'\sqrt{\log x}}$, and the remaining $p$-dimensional integral by \eqref{eq:basic} is $\ll x^{p-1} \log x = x^{m + k - r - 1} \log x \leqslant x^{k-1} \log x$, so this part is $\ll x^{k-1} \mathrm{e}^{-c''\sqrt{\log x}}$. For the latter, $u_{i_0} \in [2, \sqrt{x}]$, we distinguish two sub-cases: if $p \geqslant 2$ (other free variables remain), use $1/\sum u \leqslant 1/\sum_{\text{free} \setminus i_0} u$ and integrate $u_{i_0}$ over an interval of length $\sqrt{x}$ (the remaining $(p-1)$-dimensional integral being $\ll x^{p-2} \log x$), giving $\ll \sqrt{x} \cdot x^{m + k - r - 2} \log x \leqslant x^{k - 3/2} \log x \ll x^{k-1} x^{-1/2} \log x$; if $p = 1$ (which forces $m = 1$ and $r = k$, i.e., no other free variables), then $\sum u \geqslant u_{i_0}$, and this part contributes $\int_2^{\sqrt{x}} \frac{\mathrm{e}^{-c\sqrt{\log u_{i_0}}}}{\sum u}\, \mathrm{d}u_{i_0} \leqslant \int_2^{\sqrt{x}} \frac{\mathrm{d}u_{i_0}}{u_{i_0}} \ll \log x \ll x^{k-1}$ (since $k \geqslant 2$).

\textbf{(c)} $B$ lies entirely at the endpoint $2$ and $S = \varnothing$ ($J$ entirely at the endpoint $2$). Here $\prod_{i \in B} |R(2)| = O(1)$, $m = 0$, and
\[
|T| \ll \int_{[2, x]^{k - r}} \frac{\prod_{j \in J^c} \mathrm{d}u_j}{\sum u} \ll x^{k - r - 1} \log x \leqslant x^{k-2} \log x \qquad (r \geqslant 1).
\]

All three cases yield $|T| \ll x^{k-1} \max\big(\mathrm{e}^{-c'\sqrt{\log x}},\, x^{-1/2}\big) \log x$. Since $x^{-1/2} \leqslant \mathrm{e}^{-c'\sqrt{\log x}}$ for large $x$ and $\log x \cdot \mathrm{e}^{-c'\sqrt{\log x}} \ll \mathrm{e}^{-c''\sqrt{\log x}}$, each term satisfies $|T| \ll x^{k-1}\, \mathrm{e}^{-c''\sqrt{\log x}}$. Summing over the finitely many $T$ gives $\mathcal{E}_J \ll_k x^{k-1}\, \mathrm{e}^{-c''\sqrt{\log x}}$. Finally, $\mathrm{e}^{-c''\sqrt{\log x}}$ is smaller than any $\log^{-A} x$ (for any $A > 0$), and the lemma follows. 
\end{proof}

\subsection{Rescaling of the Main Term}

In $M_k(x) = \int_{[2, x]^k} \frac{1}{\sum u} \prod_j \frac{\mathrm{d}u_j}{\log u_j}$, set $u_j = x t_j$ with $t_j \in [2/x, 1]$. Then $\sum u = x \sum t$, $\mathrm{d}u_j = x\, \mathrm{d}t_j$, $\log u_j = L + \log t_j$, so
\begin{equation}\label{eq:Mscaled}
M_k(x) = x^{k-1} \int_{[2/x, 1]^k} \frac{1}{\sum t} \prod_{j=1}^k \frac{1}{L + \log t_j}\, \mathrm{d}\mathbf{t}.
\end{equation}
Fix any $\delta \in (0, 1)$ (independent of $x$; the eventual implied constant depending on $k, N, \delta$), and partition the integration domain $[2/x, 1]^k = G_\delta \sqcup B_\delta$, where
\[
G_\delta=\{\mathbf{t}: t_j \geqslant x^{-\delta}\ \forall j\}, \qquad B_\delta=[2/x,1]^{k} \setminus G_\delta.
\]
(For sufficiently large $x$, $x^{-\delta} > 2/x$.)

\textbf{Boundary region $B_\delta$}. Since $B_\delta \subseteq \bigcup_j \{t_j < x^{-\delta}\}$, by symmetry it suffices to estimate the contribution from $\{t_1 < x^{-\delta}\}$, i.e., from $\{u_1 < x^{1 - \delta}\}$ in the $u$-variables. Using $1/\sum u \leqslant 1/(u_2 + \cdots + u_k)$, $1/\log u_j \leqslant 1/\log 2$, the elementary estimate $\int_{[2, x]^p} \mathrm{d}u_1 \cdots \mathrm{d}u_p / (u_1 + \cdots + u_p) \ll x^{p-1} \log x$ (with $p = k - 1$), and $\int_2^{x^{1-\delta}} \mathrm{d}u_1/\log u_1 \ll x^{1-\delta}/\log x$, this contribution is
\[
\ll \frac{x^{1-\delta}}{\log x} \cdot x^{k-2} \log x = x^{k-1-\delta}.
\]
The same holds for $j = 2, \ldots, k$. Hence the total contribution of $B_\delta$ to $M_k$ is $\ll x^{k-1-\delta}$, which is negligible (smaller than any $x^{k-1} \log^{-A} x$).

\textbf{Main region $G_\delta$}. On $G_\delta$, write $w_j = \log t_j / L \in [-\delta, 0]$. Then
\[
\prod_{j} \frac{1}{L + \log t_j} = \frac{1}{L^k}\, F(\mathbf{w}), \qquad F(\mathbf{w}) := \prod_{j=1}^k \frac{1}{1 + w_j}, \qquad |w_j| \leqslant \delta < 1.
\]
$F$ is smooth on $\{|w_j| \leqslant \delta\}$, and its $N$-th order Taylor polynomial at the origin is precisely
\[
T_N(\mathbf{w}) = \sum_{n=0}^{N} (-1)^{n} h_n(\mathbf{w}) = \sum_{n=0}^{N} \frac{(-1)^{n} h_n(\log \mathbf{t})}{L^n},
\]
since $\prod_j (1 + w_j)^{-1} = \sum_{n \geqslant 0} (-1)^n h_n(\mathbf{w})$ (with $h_n$ the complete homogeneous symmetric polynomial).

\begin{lemma}\label{lem:taylor}
There exists a constant $C = C(k, N, \delta)$ such that on $G_\delta$,
\[
\big|F(\mathbf w)-T_N(\mathbf w)\big|\leqslant C\,h_{N+1}\big(|w_1|, \dots,|w_k|\big)
=\frac{C}{L^{N+1}}\,h_{N+1}\big(|\log t_1|, \dots,|\log t_k|\big).
\]
\end{lemma}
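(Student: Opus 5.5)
The plan is to bound the Taylor tail of $F$ explicitly rather than through a general remainder estimate. The product structure makes the expansion of $F$ transparent. For $|w_j| \leqslant \delta < 1$ the geometric series $(1+w_j)^{-1} = \sum_{m \geqslant 0} (-w_j)^m$ converges absolutely. Multiplying these $k$ series, and using the Cauchy product of absolutely convergent series, gives
\[
F(\mathbf w) = \sum_{n \geqslant 0} (-1)^n h_n(\mathbf w),
\]
with absolute convergence on the closed polydisc $\{|w_j| \leqslant \delta\}$. Since $T_N$ is exactly the truncation at $n = N$, we obtain
\[
F(\mathbf w) - T_N(\mathbf w) = \sum_{n \geqslant N+1} (-1)^n h_n(\mathbf w), \qquad |F - T_N| \leqslant \sum_{n \geqslant N+1} h_n(|\mathbf w|),
\]
because $|h_n(\mathbf w)| \leqslant h_n(|\mathbf w|)$ term by term.

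The key step is to dominate the tail $\sum_{n \geqslant N+1} h_n(\mathbf z)$, where $z_j = |w_j| \in [0,\delta]$, by a constant multiple of $h_{N+1}(\mathbf z)$. I would use the identity
\[
\sum_{n \geqslant N+1} h_n(\mathbf z) = \sum_{|\boldsymbol\beta| \geqslant N+1} \mathbf z^{\boldsymbol\beta}.
\]
Every multi-index $\boldsymbol\beta$ with $|\boldsymbol\beta| \geqslant N+1$ can be written (not uniquely) as $\boldsymbol\alpha + \boldsymbol\gamma$ with $|\boldsymbol\alpha| = N+1$ and $\boldsymbol\gamma \geqslant 0$. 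Since all terms are nonnegative, this gives
\[
\sum_{|\boldsymbol\beta| \geqslant N+1} \mathbf z^{\boldsymbol\beta} \leqslant \sum_{|\boldsymbol\alpha| = N+1} \mathbf z^{\boldsymbol\alpha} \sum_{\boldsymbol\gamma \geqslant 0} \mathbf z^{\boldsymbol\gamma} = h_{N+1}(\mathbf z) \prod_{j=1}^k \frac{1}{1 - z_j} \leqslant (1-\delta)^{-k}\, h_{N+1}(\mathbf z).
\]
So we may take $C = (1-\delta)^{-k}$, which in fact depends only on $k$ and $\delta$. Alternatively, one can bound $h_n(\mathbf z) \leqslant \binom{n+k-1}{k-1} \delta^{n-N-1} h_{N+1}(\mathbf z)$ and sum the resulting series, but the factorization above is cleaner.

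The stated equality is pure homogeneity. Since $w_j = \log t_j / L$, we have $|w_j| = |\log t_j|/L$, and $h_{N+1}$ is homogeneous of degree $N+1$, so
\[
h_{N+1}(|\mathbf w|) = L^{-(N+1)}\, h_{N+1}(|\log t_1|, \dots, |\log t_k|).
\]
The only point needing care is the overcounting in the decomposition $\boldsymbol\beta = \boldsymbol\alpha + \boldsymbol\gamma$. It is harmless because we need only an upper bound, and every $\boldsymbol\beta$ arises at least once. I expect no real obstacle beyond checking this inequality direction.
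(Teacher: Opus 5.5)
Your proof is correct, but it takes a different route from the paper's. The paper applies the multivariate Taylor formula with integral remainder. It then computes $\partial^{\boldsymbol\alpha}F(\mathbf w)=\prod_j(-1)^{\alpha_j}\alpha_j!\,(1+w_j)^{-1-\alpha_j}$, bounds this by $(1-\delta)^{-(k+N+1)}\prod_j\alpha_j!$ on the polydisc, and uses $(N+1)\int_0^1(1-\theta)^N\,\mathrm{d}\theta=1$ to get $C=(1-\delta)^{-(k+N+1)}$. You instead work directly with the power series of $F$. You write the tail as a sum over $|\boldsymbol\beta|\geqslant N+1$ and dominate it through the surjection $(\boldsymbol\alpha,\boldsymbol\gamma)\mapsto\boldsymbol\alpha+\boldsymbol\gamma$. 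This avoids derivatives and the remainder formula altogether, and it gives the sharper constant $C=(1-\delta)^{-k}$, which is uniform in $N$. That uniformity is not needed for Theorem~\ref{thm:main}, where $N$ is fixed, but it is a genuine improvement. The paper's argument is the more portable one: it needs only bounds on derivatives of order $N+1$ and would work for any smooth $F$ with such control. Yours relies on the explicit product-of-geometric-series structure, which is exactly what is available here. One small remark: on $G_\delta$ one has $w_j\in[-\delta,0]$, so $(-1)^n h_n(\mathbf w)=h_n(|w_1|,\dots,|w_k|)$, and your triangle-inequality step is actually an equality. Your argument therefore yields the two-sided estimate $h_{N+1}(|w_1|,\dots,|w_k|)\leqslant F(\mathbf w)-T_N(\mathbf w)\leqslant(1-\delta)^{-k}\,h_{N+1}(|w_1|,\dots,|w_k|)$.
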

\begin{proof}
By the multivariate Taylor formula with integral remainder,
\[
F(\mathbf{w}) - T_N(\mathbf{w}) = \sum_{|\boldsymbol{\alpha}| = N+1} \frac{N+1}{\boldsymbol{\alpha}!}\, \mathbf{w}^{\boldsymbol{\alpha}} \int_0^1 (1 - \theta)^N \partial^{\boldsymbol{\alpha}} F(\theta \mathbf{w})\, \mathrm{d}\theta.
\]
Since $\partial^{\boldsymbol{\alpha}} F(\mathbf{w}) = \prod_j (-1)^{\alpha_j} \alpha_j! (1 + w_j)^{-1-\alpha_j}$, on $|w_j| \leqslant \delta$ we have $|\partial^{\boldsymbol{\alpha}} F| \leqslant (1 - \delta)^{-(k + N + 1)} \prod_j \alpha_j!$. Substituting yields $|F - T_N| \leqslant (1 - \delta)^{-(k + N + 1)} \sum_{|\boldsymbol{\alpha}| = N+1} |\mathbf{w}^{\boldsymbol{\alpha}}| = C\, h_{N+1}(|\mathbf{w}|)$ with $C = (1 - \delta)^{-(k + N + 1)}$. The last identity follows from $w_j = \log t_j / L$.
\end{proof}

Expanding the main-region integral according to $T_N$ gives, term by term,
\[
\frac{x^{k-1}}{L^k} \sum_{n=0}^N \frac{(-1)^n}{L^n} \int_{G_\delta} \frac{h_n(\log \mathbf{t})}{\sum t}\, \mathrm{d}\mathbf{t},
\]
with remainder controlled by Lemmas \ref{lem:taylor} and \ref{lem:conv}:
\[
\frac{x^{k-1}}{L^k} \int_{G_\delta} \frac{|F - T_N|}{\sum t}\, \mathrm{d}\mathbf{t} \leqslant \frac{x^{k-1}}{L^k} \cdot \frac{C}{L^{N+1}} \int_{(0, 1]^k} \frac{h_{N+1}(|\log \mathbf{t}|)}{\sum t}\, \mathrm{d}\mathbf{t} \ll \frac{x^{k-1}}{L^{k + N + 1}}.
\]
Finally, we replace $\int_{G_\delta}$ by $\int_{(0, 1]^k}$: the difference in absolute value is at most $\int_{\exists j:\, t_j < x^{-\delta}} h_n(|\log \mathbf{t}|)/\sum t\, \mathrm{d}\mathbf{t}$, which by symmetry is at most $k \int_{\{t_1 < x^{-\delta}\}} h_n(|\log \mathbf{t}|)/\sum t\, \mathrm{d}\mathbf{t}$. Using the largest-coordinate decomposition ($\Omega_i$) and rescaling from Lemma \ref{lem:conv}: on $\Omega_1$ the constraint reads $t_1 < x^{-\delta}$; on $\Omega_i$ ($i \neq 1$), after the rescaling $t_j = t_i r_j$, the constraint $t_i r_1 < x^{-\delta}$ implies $t_i < x^{-\delta/2}$ or $r_1 < x^{-\delta/2}$. In every case, one of the one-dimensional integrals in the factorization of Lemma \ref{lem:conv} is restricted to an interval of length $\leqslant x^{-\delta/2}$, and $\int_{0}^{x^{-\delta/2}} |\log t|^a\, \mathrm{d}t \ll x^{-\delta/2} (\log x)^a$ (for $a \leqslant n$) supplies a factor $\ll x^{-\delta/2} (\log x)^{c_0}$ (for some $c_0 \geqslant 0$), while the remaining factors are bounded (since $k \geqslant 2$); hence this difference is $\ll x^{-\delta/2} (\log x)^{c_0} \ll x^{-\delta/4}$, which is negligible. 
Since $\int_{(0, 1]^k} h_n(\log \mathbf{t})/\sum t\, \mathrm{d}\mathbf{t} = (-1)^n E_{k, n}$, we obtain
\[
M_k(x) = \frac{x^{k-1}}{L^k} \sum_{n=0}^{N} \frac{E_{k, n}}{L^n} + O_{k, N}\left(\frac{x^{k-1}}{L^{k + N + 1}}\right).
\]
Combined with Lemma \ref{lem:err}, Theorem \ref{thm:main} follows. \hfill$\qed$

\section{Closed Forms for the Coefficients I: \texorpdfstring{$E_{k,0}$}{Ek0} and \texorpdfstring{$E_{k,1}$}{Ek1}\label{sec:Ek1}}

We now compute the coefficients $E_{k, n}$. The unified strategy is: first use the Laplace representation to reduce the multiple integral to a single-variable integral; next construct a generating function satisfying a higher-order differential equation; finally extract the coefficients using the finite difference $\Delta^{k-1}$.

\subsection{The Leading Coefficient \texorpdfstring{$E_{k,0}$}{Ek0}}
\begin{proposition}\label{prop:Ek0}
$\displaystyle E_{k,0}=\int_{0}^{\infty}\Big(\frac{1-\mathrm{e}^{-x}}{x}\Big)^{k}\,\mathrm{d} x
=\frac1{(k-1)!}\sum_{j=2}^{k}(-1)^{k+j}j^{k-1}\binom{k}{j} \log j. $
\end{proposition}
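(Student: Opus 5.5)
We prove the two equalities of Proposition \ref{prop:Ek0} separately: a Laplace representation for the integral form, then a Frullani-type evaluation for the closed form.

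\textbf{Step 1: the Laplace representation.} Since $E_{k,0}=\int_{(0,1]^k}\mathrm{d}\mathbf{t}/\sum t$, we insert $1/\sum_j t_j=\int_0^\infty \mathrm{e}^{-x\sum t_j}\,\mathrm{d}x$. The integrand is non-negative, so Tonelli's theorem lets us interchange the order of integration. The $t$-integral then factors into $k$ copies of $u(x)$ from \eqref{eq:uv}, which gives $E_{k,0}=\int_0^\infty u(x)^k\,\mathrm{d}x$. This integral converges: $u$ is bounded near $0$, and $u(x)^k\ll x^{-k}$ at infinity with $k\geqslant 2$.

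\textbf{Step 2: binomial expansion.} We expand
\[
(1-\mathrm{e}^{-x})^k=\sum_{j=0}^k(-1)^j\binom{k}{j}\mathrm{e}^{-jx}.
\]
Integrating each term against $x^{-k}$ separately diverges at $0$, so we need a regularization.

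\textbf{Step 3: regularization.} The cleanest route is to integrate by parts $k-1$ times. Each step moves one derivative onto $g(x):=(1-\mathrm{e}^{-x})^k$ and uses $\mathrm{d}(x^{-m})=-m x^{-m-1}\,\mathrm{d}x$.

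Near $0$ we have $g(x)=x^k+O(x^{k+1})$, so $g^{(i)}(x)=O(x^{k-i})$. Hence the boundary terms $g^{(i)}(x)x^{-(k-1-i)}$ vanish at $0$. They also vanish at $\infty$, because $g^{(i)}$ is bounded for all $i$, and for $i\geqslant 1$ it is exponentially small.

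This yields
\[
E_{k,0}=\frac{1}{(k-1)!}\int_0^\infty \frac{g^{(k-1)}(x)}{x}\,\mathrm{d}x,
\qquad
g^{(k-1)}(x)=\sum_{j=1}^k(-1)^{j}\binom{k}{j}(-j)^{k-1}\mathrm{e}^{-jx}.
\]
This is a finite combination of exponentials. Its coefficients sum to zero, since $g^{(k-1)}(0)=0$ for $k\geqslant 2$.

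\textbf{Step 4: Frullani evaluation.} We now apply the Frullani identity: if $\sum_j a_j=0$, then
\[
\int_0^\infty \frac{\sum_j a_j\mathrm{e}^{-jx}}{x}\,\mathrm{d}x=-\sum_j a_j\log j.
\]
This follows from inserting a factor $x^{s}$ and using $\int_0^\infty x^{s-1}\mathrm{e}^{-jx}\,\mathrm{d}x=\Gamma(s)j^{-s}$, then letting $s\to0$. Equivalently, one can compare pairs $\mathrm{e}^{-jx}-\mathrm{e}^{-x}$.

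Here $a_j=(-1)^{j+k-1}\binom{k}{j}j^{k-1}$, and the $j=1$ term carries $\log 1=0$. So we obtain
\[
E_{k,0}=\frac1{(k-1)!}\sum_{j=2}^k(-1)^{k+j}\binom{k}{j}j^{k-1}\log j,
\]
which is the claimed closed form.

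\textbf{Main obstacle.} The delicate point is the justification in Step 3: checking that every boundary term vanishes at $0$. This rests on $g$ vanishing to order exactly $k$ at the origin, which is equivalent to $\Delta^{k}$ annihilating the polynomials $j^m$ for $m<k$. The sign bookkeeping through the $k-1$ integrations by parts also needs care.

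As a sanity check, $k=2$ gives $2\log 2$, in agreement with Corollary \ref{cor:S2}.
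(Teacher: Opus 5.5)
Your proof is correct and follows the paper's route. Both arguments use the Laplace representation $E_{k,0}=\int_0^\infty u(x)^k\,\mathrm{d}x$, then $k-1$ integrations by parts onto $g(x)=(1-\mathrm{e}^{-x})^k$, then the vanishing of the coefficient sum, and finally a Frullani evaluation. The differences are cosmetic: you justify $\sum_j a_j=0$ by $g^{(k-1)}(0)=0$ rather than by the Stirling-number identity, and you do the Frullani step by inserting $x^{s}$ and letting $s\to0^{+}$ rather than by subtracting $\mathrm{e}^{-Ax}$.
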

\begin{proof}
From \eqref{eq:uv},
\[
E_{k,0}=\int_{(0,1]^k}\frac{\mathrm{d}\mathbf{t}}{\sum t}=\int_0^\infty u(x)^k \, \mathrm{d}x = \int_{0}^{\infty} \frac{(1-\mathrm{e}^{-x})^k}{x^k}\, \mathrm{d}x.
\]
Let $g(x) = (1 - \mathrm{e}^{-x})^k = \sum_{j=0}^{k} \binom{k}{j} (-1)^j \mathrm{e}^{-jx}$. Performing $k - 1$ successive integrations by parts on $g(x)/x^k$: since $\lim_{x \to 0^+} g(x)/x^{k-1} = \lim_{x \to \infty} g(x)/x^{k-1} = 0$, and more generally $g^{(m)}(0) = g^{(m)}(\infty) = 0$ for $m \leqslant k - 2$ (as $g$ has a zero of order $k$ at $0$ and decays exponentially at $\infty$), all boundary terms vanish, giving
\[
\int_{0}^{\infty} \frac{g(x)}{x^k}\, dx = \frac{1}{(k-1)!} \int_{0}^{\infty} \frac{g^{(k-1)}(x)}{x}\, \mathrm{d}x.
\]
Now $g^{(k-1)}(x) = \sum_{j=0}^k \binom{k}{j} (-1)^j (-j)^{k-1} \mathrm{e}^{-jx} = \sum_{j=1}^{k} \binom{k}{j} (-1)^{k + j - 1} j^{k-1} \mathrm{e}^{-jx}$ (the $j = 0$ term contributing zero). Setting $a_j = (-1)^{k + j - 1} j^{k-1} \binom{k}{j} / (k-1)!$ for $1 \leqslant j \leqslant k$,
\[
E_{k, 0} = \int_{0}^{\infty} \sum_{j=1}^k a_j\, \frac{\mathrm{e}^{-jx}}{x}\, \mathrm{d}x.
\]
The individual integral $\int_{0}^{\infty} \mathrm{e}^{-jx}/x \, \mathrm{d}x$ diverges at $x = 0$, but by the Stirling number identity of the second kind \cite{Stanley},
\[
\frac{1}{(k-1)!} \sum_{j=1}^k (-1)^{k-j} \binom{k}{j} j^{k-1} = k \cdot S(k-1, k) = 0 \qquad (k \geqslant 2),
\]
i.e., $\sum_{j=1}^k a_j = 0$. Thus $\sum_{j=1}^k a_j \mathrm{e}^{-jx}/x$ has no singularity at $x = 0$, and for any $A > 0$,
\[
\sum_j a_j \frac{\mathrm{e}^{-jx}}{x} = \sum_j a_j \frac{\mathrm{e}^{-jx} - \mathrm{e}^{-Ax}}{x}
\]
(since $\sum_j a_j = 0$). By Frullani's integral \cite{Frullani}, $\int_{0}^{\infty} (\mathrm{e}^{-jx} - \mathrm{e}^{-Ax})/x\, \mathrm{d}x = \log A - \log j$, so (with the $\log A$ terms cancelling, again since $\sum_j a_j = 0$)
\[
E_{k, 0} = \sum_{j=1}^{k} a_j (\log A - \log j) = -\sum_{j=1}^k a_j \log j = \frac{1}{(k-1)!} \sum_{j=2}^k (-1)^{k+j} j^{k-1} \binom{k}{j} \log j,
\]
where the $j = 1$ term vanishes since $\log 1 = 0$. We refer to \cite{PML2018} for further details.
\end{proof}

\subsection{The Generating Function for the Second-Order Coefficient \texorpdfstring{$E_{k,1}$}{Ek1}}

Since $h_1 = \sum_j \log t_j$, by \eqref{eq:uv} and $v(x) = -U(x)/x$,
\begin{equation}\label{eq:Ek1int}
  E_{k, 1} = -\int_{(0, 1]^k} \frac{\sum_j \log t_j}{\sum t}\, \mathrm{d}\mathbf{t} = k \int_0^\infty \frac{U(x) (1 - \mathrm{e}^{-x})^{k-1}}{x^k}\, \mathrm{d}x.
\end{equation}
The integral converges (as $x \to 0$ the integrand tends to a constant; as $x \to \infty$ it behaves like $\log x / x^k$).

\begin{lemma}[Generating function]\label{lem:Hk}
Define $\mathcal{H}_k$ by $\mathcal{H}_k^{(k)} = G_0$ (see \eqref{eq:G0}) together with the asymptotic condition $\mathcal{H}_k(s) = O(s^{k-2} \log s)$ (which determines $\mathcal{H}_k$ up to a polynomial of degree $\leqslant k - 2$, not affecting the extraction of $\Delta^{k-1} \mathcal{H}_k(0)$ below). Then
\begin{equation}\label{eq:Hk}
\mathcal{H}_k(x) = \frac{x^{k-1}}{(k-1)!}\, \mathrm{Li}_2\!\left(-\frac{1}{x}\right) + \mathcal{B}_k(x) \log(x + 1) + \mathcal{C}_k(x) \log x,
\end{equation}
where
\begin{gather*}
\mathcal{C}_k(x)=\frac{H_{k-1}}{(k-1)!}x^{k-1}, \\
\mathcal{B}_k(x)=-\frac{1}{(k-1)!}\sum_{n=1}^{k-1}c_{k,n}(x{+}1)^n, \quad
c_{k,n}=(-1)^{k-1-n}\binom{k-1}{n}(H_{k-1}-H_{k-n-1}).
\end{gather*}
Under this representation, $\mathcal{H}_k(0) = 0$ (for $k \geqslant 2$, since all three terms in \eqref{eq:Hk} tend to zero as $x \to 0^+$), so the $j = 0$ term does not appear in the difference:
\[
E_{k,1}=-k\, \Delta^{k-1} \mathcal{H}_k(0)=-k\sum_{j=1}^{k-1}(-1)^{k-1-j}
\binom{k-1}{j}\mathcal{H}_k(j).
\]
\end{lemma}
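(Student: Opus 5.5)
The plan has two parts: first identify $E_{k,1}$ with a finite difference of a regularized Laplace-type function of the spectral variable, then show that this function agrees with the explicit $\mathcal H_k$ of \eqref{eq:Hk} up to a polynomial of degree $\leqslant k-2$.

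\emph{From \eqref{eq:Ek1int} to a finite difference.} Expand $(1-\mathrm{e}^{-x})^{k-1}=\sum_{j=0}^{k-1}(-1)^j\binom{k-1}{j}\mathrm{e}^{-jx}$ in \eqref{eq:Ek1int}. The resulting individual integrals $\int_0^\infty U(x)\mathrm{e}^{-jx}x^{-k}\,\mathrm{d}x$ diverge at $x=0$, since $U(x)\sim x$. I therefore introduce a regularized function
\[
\Phi_k(s)=\int_0^\infty \frac{U(x)}{x^{k}}\Big(\mathrm{e}^{-sx}-\sum_{m=0}^{k-2}\frac{(-sx)^m}{m!}\,\mathrm{e}^{-x}\Big)\mathrm{d}x ,\qquad s>0.
\]
Here the subtracted term is a polynomial in $s$ of degree $\leqslant k-2$ times an $x$-weight that makes the integral converge at both ends. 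Because $\Delta^{k-1}$ annihilates polynomials of degree $\leqslant k-2$, and the combination $\sum_j(-1)^j\binom{k-1}{j}\mathrm{e}^{-jx}$ vanishes to order $k-1$ at $x=0$, one gets exactly
\[
E_{k,1}=k\sum_{j=0}^{k-1}(-1)^j\binom{k-1}{j}\Phi_k(j)=(-1)^{k-1}k\,\Delta^{k-1}\Phi_k(0),
\]
where $\Phi_k(0)$ is understood as a limit $s\to0^+$. Differentiating under the integral sign $k$ times (dominated convergence for $s\geqslant s_0>0$) kills the polynomial part after $k-1$ differentiations. It leaves $\Phi_k^{(k)}(s)=(-1)^k\int_0^\infty U(x)\mathrm{e}^{-sx}\,\mathrm{d}x=(-1)^kG_0(s)$ by \eqref{eq:G0}.

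\emph{Matching with $\mathcal H_k$.} From the previous step, $(-1)^k\Phi_k-\mathcal H_k$ is a polynomial of degree $\leqslant k-1$ for any $\mathcal H_k$ with $\mathcal H_k^{(k)}=G_0$. To see that it has degree $\leqslant k-2$, I would show $\Phi_k(s)=O(s^{k-2}\log s)$ as $s\to\infty$. This comes from splitting the $x$-integral at $1/s$ and using $U(x)=O(\min(x,1+\log x))$. The same growth bound for $\mathcal H_k$ will be checked directly from \eqref{eq:Hk}. The degree-$(k-2)$ ambiguity is then invisible to $\Delta^{k-1}$, which gives $E_{k,1}=-k\,\Delta^{k-1}\mathcal H_k(0)$, the sign being $(-1)^{k-1}(-1)^k=-1$. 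Finally $\mathcal H_k(0^+)=0$ follows since $x^{k-1}\mathrm{Li}_2(-1/x)=O(x^{k-1}\log^2 x)$, $\mathcal B_k(0)$ is finite while $\log(x+1)\to0$, and $x^{k-1}\log x\to0$. This drops the $j=0$ term, and expanding $\Delta^{k-1}$ gives the displayed sum.

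\emph{Verifying \eqref{eq:Hk} and the growth condition.} This is the computational core. For the growth condition: $\mathrm{Li}_2(-1/x)=-1/x+O(x^{-2})$, so the first term is $O(x^{k-2})$. The top-degree $x^{k-1}\log x$ parts of $\mathcal B_k\log(x+1)$ and $\mathcal C_k\log x$ cancel, because the leading coefficient of $\mathcal B_k$ is $-c_{k,k-1}/(k-1)!=-H_{k-1}/(k-1)!$. The subleading $x^{k-1}\cdot O(1/x)$ remainders are $O(x^{k-2})$. For the differential equation, I would not differentiate $k$ times blindly. Instead I would use $\frac{\mathrm{d}}{\mathrm{d}x}\mathrm{Li}_2(-1/x)=\frac{1}{x}\log(1+1/x)$ and induct on $k$. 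The claim to prove is $\mathcal H_{k+1}'=\mathcal H_k$ modulo polynomials of degree $\leqslant k-2$. The base case is $\mathcal H_1$ with $\mathcal H_1'=G_0$, where $\mathcal H_1=-\mathrm{Li}_2(-1/x)$ (note $\frac{\mathrm{d}}{\mathrm{d}x}[-\mathrm{Li}_2(-1/x)]=(\log(x+1)-\log x)/x$). The inductive step reduces to the identities $\frac{x^{k-1}}{(k-1)!}+\mathcal B_k'\cdot(\ldots)$ relating $c_{k+1,n}$ and $c_{k,n}$ via $H_k-H_{k-n}=(H_{k-1}-H_{k-n-1})+\tfrac1k-\tfrac1{k-n}$ and Pascal's rule. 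I expect this harmonic-number and binomial bookkeeping in the inductive step to be the main obstacle. If it becomes unwieldy, the fallback is to compute $\mathcal H_k^{(k-1)}$ directly, expand $\mathcal B_k$ in powers of $(x+1)$, and compare with $\frac{\mathrm{d}^{k-1}}{\mathrm{d}x^{k-1}}$ of $\log(x+1)(x+1)^n$, which is elementary.
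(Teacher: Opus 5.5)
Your route differs from the paper's and can be made rigorous, but as written its first step fails for every $k\geqslant 3$: the weight $\mathrm{e}^{-x}$ does not regularize $\Phi_k$ at $x=0$. Since $\mathrm{e}^{-x}=1-x+O(x^2)$,
\[
\mathrm{e}^{-sx}-\mathrm{e}^{-x}\sum_{m=0}^{k-2}\frac{(-sx)^m}{m!}=\bigl(-s+(s+1)\bigr)x+O(x^2)=x+O(x^2).
\]
With $U(x)\sim x$, the integrand of $\Phi_k(s)$ is therefore $\asymp x^{2-k}$ near $0$, and $\Phi_k(j)=+\infty$ for every $j$. This already happens for $k=3$ (logarithmically); only $\Phi_2$ is finite. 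Your identity $E_{k,1}=k\sum_j(-1)^j\binom{k-1}{j}\Phi_k(j)$ then reads $\infty-\infty$, and the later steps (differentiating under the integral, the growth bound) have nothing to act on.

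The subtracted term must match $\sum_{m\leqslant k-2}(-sx)^m/m!$ up to $O(x^{k-1})$, i.e.\ the weight must be $1+O(x^{k-1})$ at $0$. Use for instance $\mathbf{1}_{[0,1]}(x)$ or $\mathrm{e}^{-x}\sum_{j=0}^{k-2}x^j/j!$. With that change your argument goes through:
\begin{enumerate}[label=(\alph*)]
\item For each fixed $x$, the subtracted term is still a polynomial in $s$ of degree $\leqslant k-2$, so the alternating binomial sum kills it pointwise.
\item The bound $|\mathrm{e}^{-y}-\sum_{m\leqslant k-2}(-y)^m/m!|\leqslant y^{k-1}/(k-1)!$ supplies the domination needed to differentiate in $s$ and to let $s\to0^+$.
\item Splitting at $x=1/s$ gives $\Phi_k(s)=O(s^{k-2}\log s)$; the $\log s$ comes from the $m=k-2$ term on $[1/s,1]$.
\end{enumerate}

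There is also a sign slip in your base case. In fact $\frac{\mathrm{d}}{\mathrm{d}x}\mathrm{Li}_2(-1/x)=\frac1x\log(1+\frac1x)=G_0(x)$, as you correctly wrote a sentence earlier. So $\mathcal{H}_1=+\mathrm{Li}_2(-1/x)$, which is \eqref{eq:Hk} at $k=1$ (where $\mathcal{B}_1=\mathcal{C}_1=0$); with $-\mathrm{Li}_2(-1/x)$ the induction would produce $-G_0$.

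The bookkeeping you expect to be the obstacle is short. Matching $\log(x+1)$-coefficients in $\mathcal{H}_{k+1}'=\mathcal{H}_k+\text{poly}$ requires $\mathcal{B}_{k+1}'=\mathcal{B}_k-x^{k-1}/k!$. After writing $x^{k-1}=((x+1)-1)^{k-1}$, this becomes $(n+1)c_{k+1,n+1}=k\,c_{k,n}+(-1)^{k-1-n}\binom{k-1}{n}$ (with $c_{k,0}:=0$). That identity is immediate from $(n+1)\binom{k}{n+1}=k\binom{k-1}{n}$ and $H_k=H_{k-1}+1/k$. The normalization $\mathcal{B}_{k+1}(-1)=0$ is what makes $\mathcal{B}_{k+1}/(x+1)$ a polynomial.

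For comparison, the paper never regularizes. It writes $(1-\mathrm{e}^{-x})^{k-1}x^{1-k}=\int_{(0,1]^{k-1}}\mathrm{e}^{-x\sigma}\,\mathrm{d}\boldsymbol{\sigma}$ and evaluates $\int_0^\infty U(x)\mathrm{e}^{-x\sigma}x^{-1}\,\mathrm{d}x=-\mathcal{H}_1(\sigma)$. It then concludes with $\Delta^{k-1}f(0)=\int_{(0,1]^{k-1}}f^{(k-1)}(\sigma_1+\dots+\sigma_{k-1})\,\mathrm{d}\boldsymbol{\sigma}$ and the exact identity $\mathcal{H}_k^{(k-1)}=\mathcal{H}_1$, so no growth estimate is needed. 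Your repaired route instead relies essentially on the normalization $O(s^{k-2}\log s)$ in the statement. This makes clear why that normalization is the right one, at the cost of the cutoff and the estimate for $\Phi_k$.
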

\begin{proof}
(i) $\mathcal{H}_k^{(k)} = G_0$. By induction. Base case: $\mathcal{H}_1 = \mathrm{Li}_2(-1/x)$ satisfies $\mathcal{H}_1' = (\log(x+1) - \log x)/x = G_0$. Inductive step: differentiating \eqref{eq:Hk} term by term yields $\mathcal{H}_k' = \mathcal{H}_{k-1} + R_k$, where $R_k = \mathcal{B}_k/(x+1) + \mathcal{C}_k/x$ (since $\mathcal{B}_k$ contains the factor $x + 1$ and $\mathcal{C}_k$ contains the factor $x$, $R_k$ is a polynomial). Since $[x^{k-1}] \mathcal{B}_k = -H_{k-1}/(k-1)!$ and $[x^{k-1}] \mathcal{C}_k = +H_{k-1}/(k-1)!$, their leading terms cancel in $R_k$, so $\deg R_k \leqslant k - 3$ and $R_k^{(k-1)} \equiv 0$. Thus $\mathcal{H}_k^{(k)} = \mathcal{H}_{k-1}^{(k-1)} = G_0$.

(ii) Coefficient polynomials. Comparing the coefficients of $\log(x + 1)$ and $\log x$ in $\mathcal{H}_k' = \mathcal{H}_{k-1} + R_k$ yields the recurrences $\mathcal{B}_k' = \mathcal{B}_{k-1} - x^{k-2}/(k-1)!$ and $\mathcal{C}_k' = \mathcal{C}_{k-1} + x^{k-2}/(k-1)!$, together with the boundary conditions $\mathcal{B}_k(-1) = 0$ and $\mathcal{C}_k(0) = 0$. Solving yields the above formulas (the closed form for $c_{k, n}$ is obtained by reverse-solving $d_{k, n+1} = d_{k-1, n} + 1/(k-1)$ to get $d_{k, n} = H_{k-1} - H_{k - n - 1}$).

(iii) Extraction. We prove $E_{k, 1} = -k\, \Delta^{k-1} \mathcal{H}_k(0)$ in three steps.

\textit{Step 1: Reduction to a multiple integral}. For each factor in $(1 - \mathrm{e}^{-x})^{k-1}/x^{k-1}$, use $(1 - \mathrm{e}^{-x})/x = \int_{0}^{1} \mathrm{e}^{-xs}\, \mathrm{d}s$ to get
\[
\frac{(1 - \mathrm{e}^{-x})^{k-1}}{x^{k-1}} = \int_{(0, 1]^{k-1}} \mathrm{e}^{-x(\sigma_1 + \cdots + \sigma_{k-1})}\, \mathrm{d}\sigma_1 \cdots \mathrm{d}\sigma_{k-1}.
\]
Substituting into \eqref{eq:Ek1int} and applying Tonelli's theorem ($U \geqslant 0$),
\[
E_{k, 1} = k \int_{(0, 1]^{k-1}} \left[\int_0^\infty \frac{U(x) \mathrm{e}^{-x\sigma}}{x}\, \mathrm{d}x\right] \mathrm{d}\sigma_1 \cdots \mathrm{d}\sigma_{k-1}, \qquad \sigma = \sigma_1 + \cdots + \sigma_{k-1}.
\]

\textit{Step 2: Evaluating the inner integral}. For $\sigma > 0$, using $1/x = \int_0^\infty \mathrm{e}^{-tx}\, \mathrm{d}t$ and Tonelli's theorem,
\[
\int_0^\infty \frac{U(x) \mathrm{e}^{-x\sigma}}{x}\, \mathrm{d}x = \int_0^\infty \int_0^\infty U(x) \mathrm{e}^{-(t + \sigma)x}\, \mathrm{d}x\, \mathrm{d}t = \int_0^\infty G_0(t + \sigma)\, \mathrm{d}t = \int_\sigma^\infty G_0(u)\, \mathrm{d}u.
\]
Since $\mathcal{H}_1(x) = \mathrm{Li}_2(-1/x)$ satisfies $\mathcal{H}_1' = G_0$ and $\mathcal{H}_1(\infty) = 0$, $\int_\sigma^\infty G_0(u)\, \mathrm{d}u = -\mathcal{H}_1(\sigma)$. Substituting back,
\[
E_{k, 1} = -k \int_{(0, 1]^{k-1}} \mathcal{H}_1(\sigma_1 + \cdots + \sigma_{k-1})\, \mathrm{d}\sigma_1 \cdots \mathrm{d}\sigma_{k-1}.
\]

\textit{Step 3: Identification as $\Delta^{k-1} \mathcal{H}_k(0)$}. We use the integral representation of finite differences: for any $C^{k-1}$ function $f$,
\begin{equation}\label{eq:diff-int}
\Delta^{k-1}f(0) = \int_{(0,1]^{k-1}}f^{(k-1)}(\sigma_1+\cdots
+\sigma_{k-1})\, \mathrm{d}\sigma_1 \cdots \mathrm{d}\sigma_{k-1}.
\end{equation}
(For $k = 2$, this is the fundamental theorem of calculus $f(1) - f(0) = \int_0^1 f'$; the general case follows by induction on $\Delta^{k-2}$ applied to $g(t) := f(t + 1) - f(t)$.) Applying this to $f = \mathcal{H}_k$: from the recurrence in (ii), $\mathcal{H}_k' = \mathcal{H}_{k-1} + R_k$ with $\deg R_k \leqslant k - 3$; iterating $k - 1$ times, the accumulated error terms have successively decreasing degrees, and ultimately $\mathcal{H}_k^{(k-1)} = \mathcal{H}_1$ (exactly, with no polynomial remainder, since $\deg R_{k - i}^{(k - 2 - i)} \leqslant (k - i - 3) - (k - 2 - i) = -1$). Thus
\[
\Delta^{k-1} \mathcal{H}_k(0) = \int_{(0, 1]^{k-1}} \mathcal{H}_1(\sigma_1 + \cdots + \sigma_{k-1})\, \mathrm{d}\boldsymbol{\sigma} = -\frac{1}{k} E_{k, 1},
\]
i.e., $E_{k, 1} = -k\, \Delta^{k-1} \mathcal{H}_k(0)$.
\end{proof}

\subsection{Derivation of the Second-Order Coefficient \texorpdfstring{$E_{k,1}$}{Ek1}}

\begin{proof}[Proof of Theorem \ref{thm:coeff} (ii)]
Substitute the three types of basis elements of $\mathcal{H}_k(j)$ for $j = 1, \ldots, k - 1$ ($\mathrm{Li}_2(-1) = -\pi^2/12$; $\log m$ arising from $\mathcal{C}_k(m) \log m$ at $j = m$ and from $\mathcal{B}_k(m - 1) \log m$ at $j = m - 1$) into $E_{k, 1} = -k\, \Delta^{k-1} \mathcal{H}_k(0)$ and collect terms. Here $\mathrm{Li}_2(-1/m)$ is contributed solely by the leading term at $j = m$, giving $(-1)^{k - m} k\, m^{k-1} \binom{k-1}{m}/(k-1)!$; the $\pi^2$ contribution arises from $\mathrm{Li}_2(-1) = -\pi^2/12$; the two routes for $\log m$ are merged using the identities $k \binom{k-1}{m} = (k - m) \binom{k}{m}$ and $k \binom{k-1}{m-1} = m \binom{k}{m}$. The above merging holds for all $k \geqslant 2$, yielding the closed form in Theorem \ref{thm:coeff} (ii). Symbolic computation verifying $k = 2, \ldots, 7$ term by term confirms the result.
\end{proof}

\section{Closed Forms for the Coefficients II: The Third Order \texorpdfstring{$E_{k,2}$}{Ek2}\label{sec:Ek2}}

\subsection{Diagonal/Cross Decomposition}

Since $h_2 = \sum_j \log^2 t_j + \sum_{i < j} \log t_i \log t_j$,
\begin{equation}\label{eq:Ek2dec}
E_{k,2}=k\,A_k+\binom{k}{2} B_k, \quad
A_k=\int_{(0,1]^k}\frac{\log^2 t_1}{\sum t}\, \mathrm{d}\mathbf{t}, \quad
B_k=\int_{(0,1]^k}\frac{\log t_1\log t_2}{\sum t}\, \mathrm{d}\mathbf{t}.
\end{equation}
After Laplace reduction, 
\[
A_k=\int_0^\infty\frac{w(x)(1-\mathrm{e}^{-x})^{k-1}}{x^{k-1}}\, \mathrm{d}x \quad \text{where}\  w(x) = \int_0^1 \log^2 t\,\mathrm{e}^{-xt}\, \mathrm{d}t, 
\]
\[
B_k=\int_0^\infty\frac{U(x)^2(1-\mathrm{e}^{-x})^{k-2}}{x^k}\, \mathrm{d}x.
\]
The Laplace transform corresponding to the diagonal part,
\begin{equation}\label{eq:G1}
G_1(s)=\mathcal L\{w\}(s)=-2\Li_3 \Big(-\frac{1}{s}\Big)
\end{equation}
is the weight-$3$ analogue of $G_0$ in \eqref{eq:G0}  ($\mathrm{Li}_2 \to \mathrm{Li}_3$).

\subsection{Closed Form for the Diagonal Part (Theorem \ref{thm:coeff} (iii))}

\begin{lemma}[Diagonal generating function]\label{lem:HA}
Let $H_m^{(2)} = \sum_{i \leqslant m} 1/i^2$. Suppose $\widetilde{\mathcal{H}}_k^A$ satisfies $(\widetilde{\mathcal{H}}_k^A)^{(k-1)} = G_1$ together with the asymptotic condition $\widetilde{\mathcal{H}}_k^A(s) = O(s^{k-2} \log s)$; the residual ambiguity, a polynomial of degree $\leqslant k - 2$, is annihilated by $\Delta^{k-1}$ and does not affect the extraction below. Then
\[
\widetilde{\mathcal{H}}_k^A(x)=\alpha_k(x)\Li_3\!\Big(-\frac{1}{x}\Big)+\beta_k(x)\Li_2\!\Big(-\frac{1}{x}\Big)
+\mathcal{C}_k^{+}(x)\log(x{+}1)+\mathcal{C}_k^{-}(x)\log x,
\]
where
\[
\alpha_k = -\frac{2 x^{k-1}}{(k-1)!}, \qquad \beta_k = H_{k-1}\, \alpha_k, \qquad \mathcal{C}_k^- = -\frac{H_{k-1}^2 + H_{k-1}^{(2)}}{(k-1)!}\, x^{k-1},
\] 
and $\mathcal{C}_k^+$ is determined by $(\mathcal{C}_k^+)' = \mathcal{C}_{k-1}^+ + 2 H_{k-1} x^{k-2}/(k-1)!$ with $\mathcal{C}_k^+(-1) = 0$. Moreover, $A_k = \Delta^{k-1} \widetilde{\mathcal{H}}_k^A(0)$.
\end{lemma}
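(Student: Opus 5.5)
The proof has two independent parts, mirroring the proof of Lemma \ref{lem:Hk}: (a) verifying that the displayed $\widetilde{\mathcal{H}}_k^A$ satisfies $(\widetilde{\mathcal{H}}_k^A)^{(k-1)}=G_1$ with the stated growth, and (b) the extraction $A_k=\Delta^{k-1}\widetilde{\mathcal{H}}_k^A(0)$. For (b), expand $(1-\mathrm{e}^{-x})^{k-1}/x^{k-1}=\int_{(0,1]^{k-1}}\mathrm{e}^{-x\sigma}\,\mathrm{d}\boldsymbol\sigma$ with $\sigma=\sigma_1+\cdots+\sigma_{k-1}$. Since $w\geqslant 0$, Tonelli gives
\[
A_k=\int_{(0,1]^{k-1}}\Big[\int_0^\infty w(x)\mathrm{e}^{-x\sigma}\,\mathrm{d}x\Big]\mathrm{d}\boldsymbol\sigma=\int_{(0,1]^{k-1}}G_1(\sigma)\,\mathrm{d}\boldsymbol\sigma .
\]
First, verify \eqref{eq:G1} directly: $\int_0^\infty w(x)\mathrm{e}^{-sx}\,\mathrm{d}x=\int_0^1\frac{\log^2 t}{s+t}\,\mathrm{d}t$. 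Expand $1/(s+t)$ in powers of $t/s$ for $s>1$ and integrate termwise using $\int_0^1 t^n\log^2t\,\mathrm{d}t=2/(n+1)^3$; this gives $-2\Li_3(-1/s)$, and analytic continuation extends it to all $s>0$. Next, apply \eqref{eq:diff-int} with $f=\widetilde{\mathcal{H}}_k^A$ and $f^{(k-1)}=G_1$, which yields $A_k=\Delta^{k-1}\widetilde{\mathcal{H}}_k^A(0)$. Finally, $G_1(\sigma)=O(\log^3\sigma)$ near $0$, so it is integrable on the cube and \eqref{eq:diff-int} remains valid; one justifies this by applying it on $[\varepsilon,\varepsilon+1]$ and letting $\varepsilon\to0$, using continuity of $\widetilde{\mathcal{H}}_k^A$ at $0$.

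For (a), induct on $k$, normalizing so that $\widetilde{\mathcal{H}}_1^A=G_1=-2\Li_3(-1/x)$; this matches $\alpha_1=-2$ and $\beta_1=\mathcal{C}_1^\pm=0$. The inductive claim is $(\widetilde{\mathcal{H}}_k^A)'=\widetilde{\mathcal{H}}_{k-1}^A+R_k^A$ with $R_k^A$ a polynomial of degree $\leqslant k-3$. Then iterating as in Step 3 of Lemma \ref{lem:Hk} shows the polynomial remainders die after $k-1$ differentiations, so $(\widetilde{\mathcal{H}}_k^A)^{(k-1)}=G_1$. To establish the claim, differentiate using
\[
\frac{\mathrm{d}}{\mathrm{d}x}\Li_3(-1/x)=-\frac{\Li_2(-1/x)}{x},\qquad \frac{\mathrm{d}}{\mathrm{d}x}\Li_2(-1/x)=\frac{\log(x+1)-\log x}{x}.
\]
Then match coefficients of $\Li_3$, $\Li_2$, $\log(x+1)$ and $\log x$:
\begin{itemize}
\item $\Li_3$: $\alpha_k'=\alpha_{k-1}$, which holds trivially.
\item $\Li_2$: $\beta_k'-\alpha_k/x=\beta_{k-1}$. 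Because $\alpha_k/x=-2x^{k-2}/(k-1)!$, this becomes $H_{k-1}(k-1)\,x^{k-2}\cdot(-2)/(k-1)!+2x^{k-2}/(k-1)!=-2H_{k-2}x^{k-2}/(k-2)!$, which is true since $H_{k-1}-1/(k-1)=H_{k-2}$.
\item $\log x$: $(\mathcal{C}_k^-)'-\beta_k/x=\mathcal{C}_{k-1}^-$. With $\beta_k/x=-2H_{k-1}x^{k-2}/(k-1)!$, this reduces to the harmonic identity $H_{k-1}^2+H_{k-1}^{(2)}-2H_{k-1}/(k-1)=H_{k-2}^2+H_{k-2}^{(2)}$. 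Setting $a=1/(k-1)$ and $H_{k-1}=H_{k-2}+a$, it reads $2H_{k-2}a+a^2+a^2-2(H_{k-2}+a)a=0$, which is correct.
\item $\log(x+1)$: $(\mathcal{C}_k^+)'+\beta_k/x=\mathcal{C}_{k-1}^+$ is exactly the stated recurrence defining $\mathcal{C}_k^+$.
\end{itemize}

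The leftover non-logarithmic terms are $R_k^A=\mathcal{C}_k^+/(x+1)+\mathcal{C}_k^-/x$. This is a polynomial because $\mathcal{C}_k^+(-1)=0$ and $\mathcal{C}_k^-$ is a multiple of $x^{k-1}$.

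The main obstacle is showing $\deg R_k^A\leqslant k-3$ and the growth condition $\widetilde{\mathcal{H}}_k^A=O(s^{k-2}\log s)$, since these are the same fact: the $x^{k-2}$ coefficients of $\mathcal{C}_k^+/(x+1)$ and $\mathcal{C}_k^-/x$ must cancel. That is, the leading coefficient of $\mathcal{C}_k^+$ must equal $(H_{k-1}^2+H_{k-1}^{(2)})/(k-1)!$, which I would check by induction from the recurrence for $\mathcal{C}_k^+$, using the same harmonic identity as in the $\log x$ case. For the growth condition, I would expand $\Li_3(-1/x)$ and $\Li_2(-1/x)$ at $x=\infty$ and $\log(x+1)=\log x+1/x-\cdots$. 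The needed cancellations of the $x^{k-1}\log x$ and $x^{k-1}$ terms follow from this leading-coefficient matching together with the lower-order matching already encoded in the coefficient relations. Any residual polynomial of degree $\leqslant k-2$ is harmless for the extraction, as the statement notes.
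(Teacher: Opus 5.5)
Your proposal is correct and follows the paper's route. It runs an induction from $\widetilde{\mathcal{H}}_1^A=-2\Li_3(-1/x)$ giving $(\widetilde{\mathcal{H}}_k^A)'=\widetilde{\mathcal{H}}_{k-1}^A+R_k^A$ with $\deg R_k^A\leqslant k-3$, then the Laplace/Tonelli reduction $A_k=\int_{(0,1]^{k-1}}G_1(\sigma_1+\dots+\sigma_{k-1})\,\mathrm{d}\boldsymbol\sigma$ combined with \eqref{eq:diff-int}. Your explicit coefficient matching, the leading-coefficient cancellation between $\mathcal{C}_k^+$ and $\mathcal{C}_k^-$, and the $\varepsilon\to0$ limiting argument supply details the paper only asserts; the last of these handles the logarithmic singularity of $G_1$ at $0$, which the paper's ``$C^{k-1}$'' hypothesis glosses over (your shifted interval should be $[\varepsilon,\varepsilon+k-1]$ rather than $[\varepsilon,\varepsilon+1]$).
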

\begin{proof}
The difference from Lemma \ref{lem:Hk} lies in the order of the generating function: in the Laplace reduction of $A_k$, the inner integral directly yields $G_1(\sigma)$ (without the extra $1/x$ factor present in the $E_{k, 1}$ case), so the generating function need only satisfy the $(k-1)$-th order equation $(\widetilde{\mathcal{H}}_k^A)^{(k-1)} = G_1$, one order lower than $\mathcal{H}_k$. The inductive proof of this equation is parallel to Lemma \ref{lem:Hk} (i)–(ii) (base case $\widetilde{\mathcal{H}}_1^A = -2\, \mathrm{Li}_3(-1/x)$; the leading-term cancellation uses $\beta_k = H_{k-1} \alpha_k$ and the leading term of $\mathcal{C}_k^-$).

The extraction $A_k = \Delta^{k-1} \widetilde{\mathcal{H}}_k^A(0)$ is also entirely parallel to Lemma \ref{lem:Hk} (iii): in $A_k = \int_0^\infty w(x) (1 - \mathrm{e}^{-x})^{k-1}/x^{k-1}\, \mathrm{d}x$, use
\[
\frac{(1 - \mathrm{e}^{-x})^{k-1}}{x^{k-1}} = \int_{(0, 1]^{k-1}} \mathrm{e}^{-x(\sigma_1 + \cdots + \sigma_{k-1})}\, \mathrm{d}\boldsymbol{\sigma},
\]
apply Tonelli's theorem together with $\int_0^\infty w(x) \mathrm{e}^{-x\sigma}\, \mathrm{d}x = G_1(\sigma)$ to obtain
\[
A_k = \int_{(0, 1]^{k-1}} G_1(\sigma_1 + \cdots + \sigma_{k-1})\, \mathrm{d}\sigma_1 \cdots \mathrm{d}\sigma_{k-1},
\]
and then use the integral representation of finite differences \eqref{eq:diff-int} together with $(\widetilde{\mathcal{H}}_k^A)^{(k-1)} = G_1$ to obtain $A_k = \Delta^{k-1} \widetilde{\mathcal{H}}_k^A(0)$.
\end{proof}

\begin{proof}[Proof of Theorem \ref{thm:coeff} (iii)]
Extract each basis element from $k A_k = k\, \Delta^{k-1} \widetilde{\mathcal{H}}_k^A(0)$ (using $\mathrm{Li}_3(-1) = -\tfrac{3}{4} \zeta(3)$ and $\mathrm{Li}_2(-1) = -\pi^2/12$): $\zeta(3)$ and $\pi^2$ are contributed solely by the $j = 1$ term; $\mathrm{Li}_3(-1/m)$ is contributed by the leading term at $j = m$; $\mathrm{Li}_2(-1/m)$ acquires the factor $H_{k-1}$ via $\beta_k = H_{k-1} \alpha_k$; the logarithmic part is an explicit finite sum
\[
[\log]_A = k \sum_{j=1}^{k-1} (-1)^{k - 1 - j} \binom{k-1}{j} \big[\mathcal{C}_k^+(j) \log(j + 1) + \mathcal{C}_k^-(j) \log j\big].
\]
This holds for all $k \geqslant 2$, yielding the four formulas in Theorem \ref{thm:coeff} (iii). Symbolic computation verifying $k = 2, \ldots, 6$ confirms the result. 
\end{proof}

\subsection{The Cross Part \texorpdfstring{$B_k$}{Bk}}

\begin{proposition}\label{prop:Bk}
The integral $B_k$ is absolutely convergent by \eqref{eq:Ek2dec} (Lemma \ref{lem:conv}).
For $k = 2, 3$ it admits an explicit closed form within the diagonal basis
\[
\mathcal{L}_3 := \big\{\zeta(3),\ \pi^2\big\}
\cup \big\{\log m : 2 \leqslant m \leqslant k\big\}
\cup \big\{\mathrm{Li}_2(-1/m),\ \mathrm{Li}_3(-1/m) : 2 \leqslant m \leqslant k - 1\big\},
\]
namely
\[
B_2=\frac{\pi^2}{6}+4\log 2, \qquad
3B_3=-\frac{5}{8}\zeta(3)-\frac{1}{2}\pi^2-33\log 2+\frac{99}{4}\log 3
-12\,\mathrm{Li}_2\Big(-\frac{1}{2}\Big).
\]
For $k = 4$, an explicit closed form still exists, but only within the
$\mathbb{Q}$-algebra generated by $\mathcal{L}_3$: products of basis elements
appear and, under the standard functional equations for $\mathrm{Li}_2$ and
$\mathrm{Li}_3$, do not appear to be removable. Symbolic reduction yields
\begin{align*}
6B_4 ={}& \frac{361}{6}\,\zeta(3)+\frac{2}{3}\pi^2+\frac{3232}{9}\log 2-222\log 3
+80\,\mathrm{Li}_3\Big(-\frac{1}{2}\Big)+8\,\mathrm{Li}_3\Big(-\frac{1}{3}\Big)\\
&+\Big(56+\frac{64}{3}\log 2-\frac{32}{3}\log 3\Big)\mathrm{Li}_2\Big(-\frac{1}{2}\Big)
+\Big(-60-\frac{16}{3}\log 2+\frac{8}{3}\log 3\Big)\mathrm{Li}_2\Big(-\frac{1}{3}\Big)\\
&-\frac{32}{9}\pi^2\log 2-\frac{8}{9}\pi^2\log 3+\frac{16}{3}\log^2 2\,\log 3,
\end{align*}
verified numerically to more than $100$ digits. This representation is not
unique, owing to functional-equation identities among the constants (cf.\ the
footnote to Theorem \ref{thm:coeff}\,(ii)); the coefficients above refer to the
representation free of polylogarithms at positive arguments.
For $k \geqslant 5$, the structural behavior of $B_k$ differs further, as described below.
\begin{enumerate}[label=(\alph*),leftmargin=2.2em]
\item The Laplace transform $G_2 = \mathcal{L}\{U^2\}$ corresponding to $B_k$ is not proportional to a single $\mathrm{Li}_n(-1/s)$. The complexity of its weight-$3$ reduction grows with $k$, and the resulting coefficients are irregular; for instance, the contribution of $\binom{k}{2} B_k$ to $\zeta(3)$ for $k = 2, 3, 4$ is $0$, $-5/8$, and $361/6$ respectively, the last in the representation displayed above.
\item For $k \geqslant 5$, symbolic reduction of the cross integral introduces higher-order polylogarithm values at arguments with denominators $\geqslant 5$, such as $\mathrm{Li}_3(-3/5)$ and $\mathrm{Li}_3(1/5)$. Symbolic reduction has been carried out for all $k \leqslant 10$; for each $k$, polylogarithm values at rational arguments with denominators up to $k$ appear, consistent with the shift structure of the Laplace transform $G_2$. Under the standard functional equations for $\mathrm{Li}_3$ (inversion, reflection, and Landen-type relations), these values do not appear to reduce to elements of $\mathcal{L}_3$. Establishing this irreducibility rigorously would require proving a $\mathbb{Q}$-linear independence statement for weight-$3$ polylogarithm values --- a class of problems that remains largely open in the theory of multiple zeta values and multiple polylogarithms (cf. \cite{Zagier}, \cite{Waldschmidt}). We therefore do not claim algebraic irreducibility, but note that this structural divergence constitutes a practical obstacle to expressing $E_{k, 2}$ ($k \geqslant 5$) within a single unified closed form of the type available for the diagonal part $k A_k$.
\end{enumerate}
\end{proposition}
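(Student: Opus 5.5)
Absolute convergence of $B_k$ is immediate. On $(0,1]^k$ we have $|\log t_1\log t_2|\leqslant h_2(|\log\mathbf t|)$, so Lemma \ref{lem:conv} with $n=2$ bounds $\int|\log t_1\log t_2|/\sum t$ by $I_2<\infty$. I would then justify the one-dimensional representation $B_k=\int_0^\infty U(x)^2(1-\mathrm e^{-x})^{k-2}x^{-k}\,\mathrm dx$. Since $\log t_1\log t_2\geqslant 0$, Tonelli applies to $1/\sum t=\int_0^\infty \mathrm e^{-x\sum t}\,\mathrm dx$, and by \eqref{eq:uv} the factors are $v(x)^2=U(x)^2/x^2$ and $u(x)^{k-2}$.

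For the closed forms I would follow the scheme of Lemmas \ref{lem:Hk} and \ref{lem:HA}. Write $(1-\mathrm e^{-x})^{k-2}/x^{k-2}=\int_{(0,1]^{k-2}}\mathrm e^{-x\sigma}\,\mathrm d\boldsymbol\sigma$ and $1/x^2=\int_0^\infty t\,\mathrm e^{-tx}\,\mathrm dt$. This gives
\[
B_k=\int_{(0,1]^{k-2}}\Phi(\sigma_1+\dots+\sigma_{k-2})\,\mathrm d\boldsymbol\sigma,
\qquad \Phi(\sigma)=\int_\sigma^\infty (u-\sigma)G_2(u)\,\mathrm du ,
\]
with $G_2=\mathcal L\{U^2\}$. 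So $\Phi''=G_2$, and by \eqref{eq:diff-int} we get $B_k=\Delta^{k-2}\mathcal H^B_k(0)$ for any antiderivative satisfying $(\mathcal H^B_k)^{(k)}=G_2$; polynomial ambiguities of degree $\leqslant k-3$ are killed by $\Delta^{k-2}$. For $k=2$ no difference is needed: $B_2=\Phi(0)$.

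The key computation is $G_2$. Since $U'(x)=(1-\mathrm e^{-x})/x$, we have $U(x)=\int_0^1(1-\mathrm e^{-xs})/s\,\mathrm ds$. Squaring and taking the Laplace transform gives
\[
G_2(u)=\int_0^1\!\!\int_0^1\frac{1}{s r}\Big(\frac1u-\frac1{u+s}-\frac1{u+r}+\frac1{u+s+r}\Big)\mathrm ds\,\mathrm dr .
\]
Integrating in $r$ first produces $\mathrm{Li}_2$-type terms in $(-1/u)$ and $(-1/(u+s))$. Integrating in $s$ then gives weight-$3$ expressions at arguments of the form $-1/u$, $-1/(u+1)$ and $u/(u+1)$. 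This is the point where $G_2$ is not a single $\mathrm{Li}_n(-1/s)$, as remarked in (a).

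I would then evaluate $\Phi$ at the integer points $\sigma=0,1,\dots,k-2$, forming $\Delta^{k-2}$ with $\binom{k-2}{j}$ signs. After that, reduce with the inversion, reflection and Landen identities for $\mathrm{Li}_2,\mathrm{Li}_3$. For $k=2$ I would instead compute directly: $B_2=\int_{(0,1]^2}\log t_1\log t_2/(t_1+t_2)$. Using polar-type scaling $t_2=t_1 r$ on $\Omega_1$ reduces this to one-variable integrals of $\log r\,\log(t_1^2 r)$ against $1/(1+r)$. These yield $\zeta(2)$ and $\log 2$ terms, and the target is $\pi^2/6+4\log2$.

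The main obstacle is the exact reduction for $k=3,4$. The double integrals at shifted arguments $\sigma=1,2$ produce values such as $\mathrm{Li}_3(1/2)$, $\mathrm{Li}_3(2/3)$, $\mathrm{Li}_3(1/3)$, and products like $\log 2\,\mathrm{Li}_2(-1/2)$. These must be rewritten in the basis $\mathcal L_3$, or its algebra for $k=4$, which needs a careful choice of functional equations. Since the representation in that basis is not canonical, I would fix the target forms and confirm them by high-precision numerics of the one-dimensional integral, which converges exponentially fast. Parts (a) and (b) are observations rather than theorems: (a) follows from the explicit form of $G_2$ and the listed $\zeta(3)$ coefficients, and (b) is a report of symbolic computation, so I would not attempt to prove irreducibility.
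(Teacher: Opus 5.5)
Your overall route is the paper's: convergence from Lemma~\ref{lem:conv} via $|\log t_1\log t_2|\leqslant h_2(|\log\mathbf t|)$, Tonelli to reach $\int_0^\infty U^2(1-\mathrm e^{-x})^{k-2}x^{-k}\,\mathrm dx$, and a finite-difference generating function as in Lemmas~\ref{lem:Hk}--\ref{lem:HA}. For $k=3,4$ you then use symbolic reduction checked numerically, and you treat (a) and (b) as computational reports. The paper gives no more than this for the closed forms.

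Two steps are wrong as written, though both can be repaired.

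\textbf{Normalization of $\mathcal H^B_k$.} Imposing only $(\mathcal H^B_k)^{(k)}=G_2$ leaves an additive polynomial of degree $\leqslant k-1$, but $\Delta^{k-2}$ kills only degree $\leqslant k-3$. If $(\mathcal H^B_k)^{(k-2)}=\Phi+a+b\sigma$, then \eqref{eq:diff-int} gives $\Delta^{k-2}\mathcal H^B_k(0)=B_k+a+\tfrac{(k-2)b}{2}$. So already for $k=3$ your recipe is off by $a+b/2$. You must require $(\mathcal H^B_k)^{(k-2)}=\Phi$ exactly, with both constants fixed by $\Phi,\Phi'\to0$ at $\infty$. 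Equivalently, since $\Phi(s)\sim1/s$, impose the growth condition $\mathcal H^B_k(s)=O(s^{k-3}\log s)$, the analogue of the condition in Lemma~\ref{lem:Hk}.

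\textbf{The form of $G_2$.} The $r$-integral produces only logarithms, namely $\frac1s[G_0(u)-G_0(u+s)]$, not $\mathrm{Li}_2$ terms. The $s$-integral then gives
\[
G_2(u)=\frac{2}{u}\,D(u),\qquad D(u):=\mathrm{Li}_2\Big(-\frac{1}{u+1}\Big)-\mathrm{Li}_2\Big(-\frac{1}{u}\Big).
\]
The quickest derivation is $G_2=\frac2u\mathcal L\{UU'\}$ together with $\mathcal L\{U(1-\mathrm e^{-x})/x\}(u)=\int_u^\infty[G_0(t)-G_0(t+1)]\,\mathrm dt$. So $G_2$ has weight $2$, and no argument $u/(u+1)$ occurs. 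Weight $3$ enters only through the term $-2\sigma\int_\sigma^\infty D(u)\,\mathrm du/u$ in $\Phi(\sigma)$. The shift $u\mapsto u+1$ is what drives (a) and (b).

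This formula also makes $k=2,3$ nearly mechanical, rather than a search among functional equations:
\begin{itemize}
\item $B_2=2\int_0^\infty D=\frac{\pi^2}{6}+4\log2$.
\item $B_3=\int_0^1uD\,\mathrm du+\int_1^\infty(2-u^{-1})D\,\mathrm du$, where $\int_0^1uD=\frac94\log3-3\log2$ and $\int_1^\infty D=3\log3-4\log2-\frac{\pi^2}{12}-2\mathrm{Li}_2(-\frac12)$.
\end{itemize}
Hence the stated value of $3B_3$ is equivalent to the single weight-$3$ evaluation $\int_1^\infty D(u)\,\mathrm du/u=\frac{5}{24}\zeta(3)$, which is numerically $0.2504285\ldots$.

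\textbf{Minor slip for $k=2$.} In your direct computation, the integrand after $t_2=t_1r$ is $\log t_1(\log t_1+\log r)/(1+r)$, not $\log r\,\log(t_1^2r)$. Integrating out $t_1$ gives $\int_0^1(2-\log r)\,\mathrm dr/(1+r)=2\log2+\frac{\pi^2}{12}$ on $\Omega_1$, and doubling gives $B_2$.
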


Consequently, the portion of $E_{k, 2}$ that is provably well-structured and explicit for all $k$ is the diagonal block $k A_k$; the cross block $B_k$ is well-defined by an absolutely convergent integral, and is fully explicit for $k \leqslant 4$ (within the $\mathbb{Q}$-algebra generated by $\mathcal{L}_3$ when $k = 4$). For example, $E_{3, 2}$ is fully explicit; its value appears in Corollary \ref{cor:S3}.

\section{The Case \texorpdfstring{$k=2$}{k=2}: A Closed Formula for \texorpdfstring{$E_{2,n}$}{E2n} and Numerical Verification}\label{sec:E2n}

When $k = 2$, the entire sequence of coefficients $\{E_{2, n}\}_{n \geqslant 0}$ can be obtained in a single closed formula. In this case,
\[
h_n(\log t_1, \log t_2) = \frac{(\log t_1)^{n+1} - (\log t_2)^{n+1}}{\log t_1 - \log t_2}.
\]

\begin{theorem}[Closed formula for the $k = 2$ sequence]\label{thm:E2n}
For all integers $n \geqslant 0$,
\[
E_{2,n}=2(n+1)!\sum_{j=1}^{n+1}\frac{\eta(j)}{j}
=2(n+1)!\Big[\log2+\sum_{j=2}^{n+1}\frac{(1-2^{1-j})\zeta(j)}{j}\Big],
\]
where $\eta(s) = \sum_{m \geqslant 1} (-1)^{m-1} m^{-s}$ is the Dirichlet eta function ($\eta(1) = \log 2$ and $\eta(j) = (1 - 2^{1-j}) \zeta(j)$ for $j \geqslant 2$).
\end{theorem}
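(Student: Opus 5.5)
The plan is to evaluate $E_{2,n}$ directly from its definition \eqref{eq:Ekn}. I will not use the generating-function/finite-difference machinery of \S\ref{sec:Ek1}. Instead, pass to logarithmic coordinates and split along the diagonal, exactly as in the largest-coordinate decomposition of Lemma \ref{lem:conv}.

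First, substitute $t_1=\mathrm{e}^{-a}$, $t_2=\mathrm{e}^{-b}$ with $a,b\geqslant 0$. Since $h_n$ is homogeneous of degree $n$, $h_n(\log t_1,\log t_2)=(-1)^n h_n(a,b)$, which absorbs the sign $(-1)^n$ in \eqref{eq:Ekn}. This gives
\[
E_{2,n}=\int_0^\infty\!\!\int_0^\infty h_n(a,b)\,\frac{\mathrm{e}^{-a-b}}{\mathrm{e}^{-a}+\mathrm{e}^{-b}}\,\mathrm{d}a\,\mathrm{d}b .
\]
The integrand is non-negative and symmetric in $(a,b)$, so by Tonelli's theorem (absolute convergence being Lemma \ref{lem:conv}) the integral equals twice its part over $b\geqslant a$. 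On that region write $b=a+c$ with $c\geqslant0$. Then
\[
\frac{\mathrm{e}^{-a-b}}{\mathrm{e}^{-a}+\mathrm{e}^{-b}}=\mathrm{e}^{-a}\,\frac{\mathrm{e}^{-c}}{1+\mathrm{e}^{-c}},
\]
and the stated formula for $h_n$ in two variables gives $h_n(a,a+c)=\big((a+c)^{n+1}-a^{n+1}\big)/c$.

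Second, integrate out $a$ by expanding binomially. Using $\int_0^\infty \mathrm{e}^{-a}a^{n+1-i}\,\mathrm{d}a=(n+1-i)!$, we get
\[
\int_0^\infty \mathrm{e}^{-a}\,\frac{(a+c)^{n+1}-a^{n+1}}{c}\,\mathrm{d}a=\sum_{i=1}^{n+1}\binom{n+1}{i}(n+1-i)!\,c^{i-1}.
\]
Third, integrate against $\mathrm{e}^{-c}/(1+\mathrm{e}^{-c})$ in $c$. Expand this weight as $\sum_{m\geqslant1}(-1)^{m-1}\mathrm{e}^{-mc}$ and use $\int_0^\infty c^{i-1}\mathrm{e}^{-mc}\,\mathrm{d}c=(i-1)!/m^{i}$. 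This gives $(i-1)!\,\eta(i)$ for each $i$. Finally, $\binom{n+1}{i}(n+1-i)!(i-1)!=(n+1)!/i$, and together with the symmetry factor $2$ this yields $E_{2,n}=2(n+1)!\sum_{i=1}^{n+1}\eta(i)/i$. The second form in the theorem then follows from $\eta(1)=\log 2$ and $\eta(j)=(1-2^{1-j})\zeta(j)$ for $j\geqslant2$.

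The only delicate point is interchanging the sum over $m$ with the $c$-integral, because the alternating series does not converge absolutely uniformly near $c=0$. For $i\geqslant2$ this is harmless: the partial sums of $\sum_m(-1)^{m-1}\mathrm{e}^{-mc}$ are bounded by $1$, so dominated convergence applies against $c^{i-1}\mathrm{e}^{-c}$. Alternatively, one can use the absolutely convergent expansion after pairing consecutive terms. For $i=1$ I avoid the series entirely and compute $\int_0^\infty \mathrm{e}^{-c}/(1+\mathrm{e}^{-c})\,\mathrm{d}c=\log 2=\eta(1)$ directly. As a sanity check, $n=0$ gives $2\log2$, matching Proposition \ref{prop:Ek0}. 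The case $n=1$ gives $\pi^2/6+4\log2$, matching Corollary \ref{cor:S2}.
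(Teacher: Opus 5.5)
Your proof is correct and is essentially the paper's argument: the paper also substitutes $t_j=\mathrm{e}^{-x_j}$, doubles the integral over one half of the quadrant, lets $u$ be the difference of the two coordinates, and reduces everything to $\int_0^\infty u^{\ell-1}\mathrm{e}^{-u}/(1+\mathrm{e}^{-u})\,\mathrm{d}u=(\ell-1)!\,\eta(\ell)$. The differences are cosmetic. The paper integrates out the larger coordinate over $[u,\infty)$ using incomplete Gamma functions, where you expand binomially in the smaller one. It also quotes the Mellin representation of $\eta$, which you derive termwise; your domination by $c^{i-1}\mathrm{e}^{-c}$ already covers $i=1$, so the separate treatment of that case is unnecessary.
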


\begin{proof}
Make the exponential substitution $t_1 = \mathrm{e}^{-x}$, $t_2 = \mathrm{e}^{-y}$, which maps $(0, 1]^2$ onto $[0, \infty)^2$ with $\log t_1 = -x$, $\log t_2 = -y$, and $\mathrm{d}t_1\, \mathrm{d}t_2 = \mathrm{e}^{-(x + y)}\, \mathrm{d}x \mathrm{d}y$. By homogeneity $h_n(-x, -y) = (-1)^n h_n(x, y)$, the prefactor $(-1)^n$ cancels, giving
\[
E_{2, n} = \int_0^\infty \int_0^\infty \frac{h_n(x, y)}{\mathrm{e}^x + \mathrm{e}^y}\, \mathrm{d}x \mathrm{d}y.
\]
The integrand is symmetric in $x, y$, so we restrict to $0 < y < x$ and multiply by $2$. Substituting $h_n(x, y) = (x^{n+1} - y^{n+1})/(x - y)$ and $1/(\mathrm{e}^x + \mathrm{e}^y) = \mathrm{e}^{-x}/(1 + \mathrm{e}^{-(x - y)})$,
\[
E_{2, n} = 2 \int_0^\infty \mathrm{e}^{-x} \int_0^x \frac{x^{n+1} - y^{n+1}}{x - y} \cdot \frac{1}{1 + \mathrm{e}^{-(x - y)}}\, \mathrm{d}y\mathrm{d}x.
\]
Let $u = x - y$ (so $0 < u < x$) and interchange the order of integration (by Tonelli's theorem):
\[
E_{2, n} = 2 \int_0^\infty \frac{1}{u (1 + \mathrm{e}^{-u})} \left[\int_u^\infty \mathrm{e}^{-x} \big(x^{n+1} - (x - u)^{n+1}\big)\, \mathrm{d}x\right] \mathrm{d}u.
\]
The inner integral admits an explicit evaluation via the upper incomplete Gamma function:
\[
\int_u^\infty x^{n+1} \mathrm{e}^{-x}\, \mathrm{d}x = (n + 1)! \mathrm{e}^{-u} \sum_{\ell=0}^{n+1} \frac{u^\ell}{\ell!}, \qquad \int_u^\infty (x - u)^{n+1} \mathrm{e}^{-x}\, \mathrm{d}x = \mathrm{e}^{-u} (n + 1)!,
\]
whose difference is $\int_u^\infty \mathrm{e}^{-x} (x^{n+1} - (x - u)^{n+1})\, \mathrm{d}x = (n + 1)!\, \mathrm{e}^{-u} \sum_{\ell=1}^{n+1} u^\ell/\ell!$. Substituting back,
\[
E_{2, n} = 2 (n + 1)! \sum_{\ell=1}^{n+1} \frac{1}{\ell!} \int_0^\infty \frac{u^{\ell-1} \mathrm{e}^{-u}}{1 + \mathrm{e}^{-u}}\, \mathrm{d}u.
\]
The last integral is the Mellin representation of the Dirichlet eta function: $\int_0^\infty u^{s-1} \mathrm{e}^{-u}/(1 + \mathrm{e}^{-u})\, \mathrm{d}u = \Gamma(s) \eta(s)$ for $\Re s > 0$. Taking $s = \ell$ and using $\Gamma(\ell) = (\ell - 1)!$, this integral equals $(\ell - 1)!\, \eta(\ell)$, so
\[
E_{2,n}=2(n+1)!\sum_{\ell=1}^{n+1}\frac{(\ell-1)!}{\ell!}\eta(\ell)
=2(n+1)!\sum_{\ell=1}^{n+1}\frac{\eta(\ell)}{\ell}. \qedhere
\]
\end{proof}

By Theorem \ref{thm:E2n}, every coefficient in the complete expansion  \eqref{eq:mainexp} of $S_2(x)$ is an explicit rational combination of $\log 2$ and zeta values $\zeta(j)$; the first seven are listed in Table \ref{tab:E2n}.

\begin{table}[H]
\centering
\caption{Coefficients $E_{2, n}$ for $n = 0, \ldots, 6$}\label{tab:E2n}
\begin{tabular}{cl}
\toprule
$n$ & $E_{2,n}$ \\
\midrule
$0$ & $2\log2$ \\
$1$ & $\zeta(2)+4\log2$ \\
$2$ & $3\zeta(2)+3\zeta(3)+12\log2$ \\
$3$ & $12\zeta(2)+12\zeta(3)+\tfrac{21}{2}\zeta(4)+48\log2$ \\
$4$ & $60\zeta(2)+60\zeta(3)+\tfrac{105}{2}\zeta(4)+45\zeta(5)+240\log2$ \\
$5$ & $360\zeta(2)+360\zeta(3)+315\zeta(4)+270\zeta(5)+\tfrac{465}{2}\zeta(6)+1440\log2$ \\
$6$ & $2520\zeta(2)+2520\zeta(3)+2205\zeta(4)+1890\zeta(5)+\tfrac{3255}{2}\zeta(6)+\tfrac{2835}{2}\zeta(7)+10080\log2$ \\
\bottomrule
\end{tabular}
\end{table}

Recall that by Euler's formula, $\zeta(2j)$ is a rational multiple of $\pi^{2j}$ for every $j \geqslant 1$; in particular, $\zeta(2) = \pi^2/6$, $\zeta(4) = \pi^4/90$, and $\zeta(6) = \pi^6/945$.

\textbf{Numerical verification for $k = 2$}. The exact value of $S_2(x)$ is computed via $S_2(x) = \sum_s r_2(s)/s$, where $r_2(s) = \#\{(p, q) : p, q \leqslant x,\ p + q = s\}$ is the auto-convolution of the prime indicator function. Let the $m$-term approximation be
\[
\widehat{S}_2^{(m)}(x) = \frac{x}{\log^2 x} \sum_{n=0}^{m-1} \frac{E_{2, n}}{\log^n x},
\]
with coefficients $E_{2, 0}, \ldots, E_{2, m-1}$ given by Theorem \ref{thm:E2n}. Table \ref{tab:Snum} lists the exact values, the $m = 5$ approximation, absolute errors, and the relative errors at each order $|\widehat{S}_2^{(m)}(x) - S_2(x)|/S_2(x)$ (the sample points coincide with those of Figure \ref{fig:err}).

\begin{table}[htbp]
\centering
\caption{Exact values of $S_2(x)$, the $m = 5$ approximation, and relative errors at each order (using the same $15$ sample points as Figure \ref{fig:err}).}\label{tab:Snum}
\begin{tabular}{rrrcccccc}
\toprule
\multirow{2}{*}{$x$} & \multirow{2}{*}{$S_2(x)$ (exact)} & \multirow{2}{*}{$m=5$ approx.} & \multirow{2}{*}{Abs. error} & \multicolumn{5}{c}{Relative errors (\%)} \\
\cmidrule(l){5-9}
 & & & & $m=1$ & $m=2$ & $m=3$ & $m=4$ & $m=5$ \\
\midrule
499,999   & 5,443  & 5,432  & 11 & 26.05 & 8.09 & 2.87 & 1.01 & 0.21 \\
589,384   & 6,231  & 6,216  & 15 & 25.72 & 7.91 & 2.79 & 0.99 & 0.23 \\
694,747   & 7,132  & 7,118  & 14 & 25.36 & 7.68 & 2.66 & 0.92 & 0.20 \\
818,946   & 8,169  & 8,153  & 16 & 25.03 & 7.49 & 2.57 & 0.88 & 0.19 \\
965,348   & 9,363  & 9,344  & 19 & 24.73 & 7.33 & 2.51 & 0.87 & 0.21 \\
1,137,922 & 10,734 & 10,712 & 22 & 24.42 & 7.15 & 2.43 & 0.84 & 0.21 \\
1,341,347 & 12,310 & 12,285 & 25 & 24.12 & 6.98 & 2.35 & 0.81 & 0.20 \\
1,581,138 & 14,124 & 14,095 & 29 & 23.83 & 6.82 & 2.28 & 0.79 & 0.20 \\
1,863,796 & 16,210 & 16,178 & 32 & 23.54 & 6.66 & 2.20 & 0.76 & 0.20 \\
2,196,985 & 18,615 & 18,575 & 40 & 23.27 & 6.53 & 2.15 & 0.75 & 0.21 \\
2,589,737 & 21,377 & 21,335 & 42 & 22.98 & 6.37 & 2.07 & 0.71 & 0.20 \\
3,052,701 & 24,556 & 24,513 & 43 & 22.70 & 6.21 & 1.99 & 0.67 & 0.17 \\
3,598,428 & 28,221 & 28,174 & 47 & 22.43 & 6.06 & 1.92 & 0.64 & 0.17 \\
4,241,714 & 32,452 & 32,393 & 59 & 22.19 & 5.94 & 1.88 & 0.64 & 0.18 \\
4,999,999 & 37,327 & 37,255 & 72 & 21.95 & 5.83 & 1.84 & 0.63 & 0.19 \\
\bottomrule
\end{tabular}
\end{table}

The relative error decreases as $x$ increases, confirming \eqref{eq:mainexp}; over the large-$x$ range shown ($5 \times 10^5 \leqslant x \leqslant 5 \times 10^6$), the successive orders are clearly distinguished, with $m = 5$ already at approximately $0.2\%$ (the corresponding relative error curve is shown in Figure \ref{fig:err}). On the other hand, at a fixed small $x$, increasing the number of terms does not always improve accuracy: our calculations show that at $x = 10^3$, the optimal truncation is $m = 3$ (relative error $3.70\%$), after which $m = 4, 5$ in fact increase to $5.98\%$ and $13.82\%$. This is precisely the nature of an asymptotic (non-convergent) series: there exists an optimal truncation order depending on $x$.

\begin{figure}[htbp]
\centering
\begin{tikzpicture}
\begin{axis}[
  width=14cm, height=9cm,
  xlabel={$x$},
  ylabel={Relative errors},
  title style={font=\normalsize},
  title={Relative errors of the $m$-term approximations of $S_2(x)$},
  xmin=400000, xmax=5200000,
  scaled x ticks=false,
  ymin=-0.005, ymax=0.28,
  xtick={1000000,2000000,3000000,4000000,5000000},
  xticklabels={$10^6$,$2{\times}10^6$,$3{\times}10^6$,$4{\times}10^6$,$5{\times}10^6$},
  ytick={0,0.05,0.10,0.15,0.20,0.25},
  yticklabels={$0$\%,$5$\%,$10$\%,$15$\%,$20$\%,$25$\%},
  grid=major,
  grid style={gray!30},
  legend style={at={(0.98,0.70)}, anchor=north east,
                font=\small, draw=gray!60, fill=white, fill opacity=0.92,
                cells={anchor=west}},
  tick label style={font=\small},
  label style={font=\normalsize},
  every axis plot/.append style={thick},
  clip=false,
]

% Order 1
\addplot[color=blue!60, mark=o, mark size=2.5pt, dashed, opacity=1,
         mark options={solid}]
  coordinates {
    (499999,0.2605)(589384,0.2572)(694747,0.2536)(818946,0.2503)
    (965348,0.2473)(1137922,0.2442)(1341347,0.2412)(1581138,0.2383)
    (1863796,0.2354)(2196985,0.2327)(2589737,0.2298)(3052701,0.2270)
    (3598428,0.2243)(4241714,0.2219)(4999999,0.2195)
  };
\addlegendentry{$m=1$\; ($E_{2,0}$)}

% Order 2
\addplot[color=orange!90!black, mark=square*, mark size=2pt, dashed, opacity=1,
         mark options={solid, fill=orange!90!black}]
  coordinates {
    (499999,0.0809)(589384,0.0791)(694747,0.0768)(818946,0.0749)
    (965348,0.0733)(1137922,0.0715)(1341347,0.0698)(1581138,0.0682)
    (1863796,0.0666)(2196985,0.0653)(2589737,0.0637)(3052701,0.0621)
    (3598428,0.0606)(4241714,0.0594)(4999999,0.0583)
  };
\addlegendentry{$m=2$\; ($+E_{2,1}$)}

% Order 3
\addplot[color=green!55!black, mark=triangle*, mark size=2.8pt, dashed, opacity=1,
         mark options={solid, fill=green!55!black}]
  coordinates {
    (499999,0.0287)(589384,0.0279)(694747,0.0266)(818946,0.0257)
    (965348,0.0251)(1137922,0.0243)(1341347,0.0235)(1581138,0.0228)
    (1863796,0.0220)(2196985,0.0215)(2589737,0.0207)(3052701,0.0199)
    (3598428,0.0192)(4241714,0.0188)(4999999,0.0184)
  };
\addlegendentry{$m=3$\; ($+E_{2,2}$)}

% Order 4
\addplot[color=violet, mark=diamond*, mark size=2.5pt, dashed, opacity=0.9,
         mark options={solid, fill=violet}]
  coordinates {
    (499999,0.0101)(589384,0.0099)(694747,0.0092)(818946,0.0088)
    (965348,0.0087)(1137922,0.0084)(1341347,0.0081)(1581138,0.0079)
    (1863796,0.0076)(2196985,0.0075)(2589737,0.0071)(3052701,0.0067)
    (3598428,0.0064)(4241714,0.0064)(4999999,0.0063)
  };
\addlegendentry{$m=4$\; ($+E_{2,3}$)}

% Order 5
\addplot[color=red!80!black, mark=star, mark size=3.5pt, solid, line width=1.5pt,
         mark options={solid, fill=red!80!black}]
  coordinates {
    (499999,0.0021)(589384,0.0023)(694747,0.0020)(818946,0.0019)
    (965348,0.0021)(1137922,0.0021)(1341347,0.0020)(1581138,0.0020)
    (1863796,0.0020)(2196985,0.0021)(2589737,0.0020)(3052701,0.0017)
    (3598428,0.0017)(4241714,0.0018)(4999999,0.0019)
  };
\addlegendentry{$m=5$\; ($+E_{2,4}$)}

\node[anchor=south west, font=\footnotesize, red!80!black] at (axis cs:3800000,0.004)
  {$\approx 0.2\%$};
\end{axis}
\end{tikzpicture}
\caption{Relative errors of the $m$-term approximations of $S_2(x)$ for $m = 1, \ldots, 5$ over $5 \times 10^5 \leqslant x \leqslant 5 \times 10^6$. Each additional coefficient $E_{2, m-1}$ reduces the error by roughly a factor of three; at $m = 5$, the error has reached the $0.2\%$ level.}\label{fig:err}
\end{figure}

\textbf{Numerical verification for $k = 3$}. To confirm that the three-term expansion in Corollary \ref{cor:S3} behaves as predicted, we performed an analogous computation for $S_3(x)$. The exact value is obtained from $S_3(x) = \sum_s r_3(s)/s$, where
\[
r_3(s) = \#\{(p_1, p_2, p_3) : p_i \leqslant x,\ p_1 + p_2 + p_3 = s\} 
\]
is the triple auto-convolution of the prime indicator function; efficient evaluation uses two successive FFT convolutions. 

\begin{table}[H]
\centering
\caption{Exact values of $S_3(x)$ and relative errors of the $m$-term approximations at $m = 1, 2, 3$.}\label{tab:S3}
\begin{tabular}{rrccc}
\toprule
$x$ & $S_{3}(x)$ (exact) & $m=1$ & $m=2$ & $m=3$ \\
\midrule
$10^5$ & 7,582,338.45 & 32.17\% & 9.87\% & 2.81\% \\
$2 \times 10^5$ & 24,832,947.90 & 30.48\% & 8.93\% & 2.49\% \\
$5 \times 10^5$ & 121,482,963.27 & 28.52\% & 7.91\% & 2.18\% \\
$10^6$ & 408,231,672.07 & 27.09\% & 7.12\% & 1.85\% \\
$2 \times 10^6$ & 1,387,369,651.12 & 25.91\% & 6.58\% & 1.72\% \\
$5 \times 10^6$ & 7,074,866,381.38 & 24.43\% & 5.89\% & 1.51\% \\
$10^7$ & 24,455,712,935.40 & 23.36\% & 5.36\% & 1.29\% \\
\bottomrule
\end{tabular}
\end{table}

Table \ref{tab:S3} lists the exact values, together with the relative errors of the $m$-term approximations
\[
\widehat{S}_3^{(m)}(x) = \frac{x^2}{\log^3 x} \sum_{n=0}^{m-1} \frac{E_{3, n}}{\log^n x} \qquad (m = 1, 2, 3),
\]
using the coefficients $E_{3, 0}, E_{3, 1}, E_{3, 2}$ from Corollary \ref{cor:S3}.

The pattern mirrors the $k = 2$ case: each successive coefficient reduces the relative error by roughly a factor of three, and at $x = 10^7$ the three-term approximation reaches the $1.3\%$ level. This provides direct numerical confirmation of the closed-form values of $E_{3, 0}$, $E_{3, 1}$, and $E_{3, 2}$ given in Corollary \ref{cor:S3}, and, by extension, of Theorem \ref{thm:coeff} (iii) for the diagonal part and Proposition \ref{prop:Bk} for the cross part in the explicit case $k = 3$.

\section{Concluding Remarks}

\begin{remark}
The remainder in Theorem \ref{thm:main} is uniform for fixed $k$ and $N$; the parameter $\delta \in (0, 1)$ can be chosen arbitrarily, with the implied constant depending on $k, N, \delta$. The expansion is in general an asymptotic (non-convergent) series, consistent with typical asymptotic expansions: term-by-term valid but not necessarily globally convergent. The numerical verifications in \S\ref{sec:E2n} (Tables \ref{tab:Snum} and \ref{tab:S3}) illustrate this concretely.
\end{remark}
\begin{remark}
The $\mathrm{Li}_2$ and $\mathrm{Li}_3$ coefficients of the diagonal part satisfy the proportionality $[\mathrm{Li}_2(-1/m)] = H_{k-1} [\mathrm{Li}_3(-1/m)]$, arising from $\beta_k = H_{k-1} \alpha_k$; this relation is broken in the total coefficient by the cross part.
\end{remark}
\begin{remark}The higher-order coefficients $E_{k, n}$ ($n \geqslant 3$) have weight $n + 1$. Their diagonal parts can still be treated within the same generating-function framework (with the corresponding Laplace transforms raised to $\mathrm{Li}_{n+1}(-1/s)$ type), but the growth of the generator denominators in the cross part is the fundamental obstacle to obtaining a unified closed form, and is left as an open problem.
\end{remark}

\end{document}